\begin{document}





\newtheorem{thm}{Theorem}[section]
\newtheorem{prop}[thm]{Proposition}
\newtheorem{cor}[thm]{Corollary}
\newtheorem{defn}{Definition}[section]
\newtheorem{rem}{Remark}[section]


\newcommand{\X}{\mathfrak{X}}
\newcommand{\B}{\mathcal{B}}
\newcommand{\s}{\mathfrak{S}}
\newcommand{\g}{\mathfrak{g}}
\newcommand{\W}{\mathcal{W}}
\newcommand{\T}{\mathcal{T}}
\newcommand{\Lgr}{\mathrm{L}}
\newcommand{\dd}{\mathrm{d}}
\newcommand{\n}{\nabla}
\newcommand{\nn}{\nabla'}
\newcommand{\lm}{\lambda}
\newcommand{\ta}{\theta}
\newcommand{\pd}{\partial}
\newcommand{\LL}{\mathcal{L}}

\newcommand{\grad}{\mathrm{grad}}
\newcommand{\End}{\mathrm{End}}
\newcommand{\im}{\mathrm{Im}}
\newcommand{\id}{\mathrm{id}}

\newcommand{\ie}{i.e. }
\newfont{\w}{msbm9 scaled\magstep1}
\def\R{\mbox{\w R}}
\newcommand{\norm}[1]{\left\Vert#1\right\Vert ^2}
\newcommand{\nnorm}[1]{\left\Vert#1\right\Vert ^{*2}}
\newcommand{\nN}{\norm{N}}
\newcommand{\nP}{\norm{\nabla P}}
\newcommand{\nnP}{\nnorm{\nabla P}}
\newcommand{\tr}{{\rm tr}}

\newcommand{\thmref}[1]{Theorem~\ref{#1}}
\newcommand{\propref}[1]{Proposition~\ref{#1}}
\newcommand{\secref}[1]{\S\ref{#1}}
\newcommand{\dfnref}[1]{Definition~\ref{#1}}



\title[A connection on Riemannian product manifolds]
{A natural connection on a basic class of Riemannian product
manifolds}

\author{Dobrinka Gribacheva}

\maketitle

\begin{abstract}
A Riemannian manifold $M$ with an integrable almost product
structure $P$ is called a Riemannian product manifold. Our
investigations are on the manifolds $(M,P,g)$ of the largest class
of Riemannian product manifolds, which is closed with respect to
the group of conformal transformations of the metric $g$. This
class is an analogue of the class of locally conformal K\"ahler
manifolds in almost Hermitian geometry. In the present paper we
study a natural connection $D$ on $(M,P,g)$ (\ie $DP=Dg=0$). We
find necessary and sufficient conditions the curvature tensor of
$D$ to have properties similar to the K\"ahler tensor in Hermitian
geometry. We pay attention to the case when $D$ has a parallel
torsion. We establish that the Weyl tensors for the connection $D$
and the Levi-Civita connection coincide as well as the invariance
of the curvature tensor of $D$ with respect to the usual conformal
transformation. We consider the case when $D$ is a flat
connection. We construct an example of the considered manifold by
a Lie group where $D$ is
a flat connection with non-parallel torsion.\\
\textbf{Key words:} Riemannian manifold, almost product structure,
integrable structure, linear connection, parallel torsion, Lie algebra, Lie group.\\
\textbf{2010 Mathematics Subject Classification:} 53C05, 53C15,
53C25, 53B05, 22E60.
\end{abstract}




\section*{Introduction}

The systematic development of the theory of Riemannian almost
product manifolds, \ie Riemannian manifolds with almost product
structure, was started by K. Yano in \cite{Ya}. The geometry of a
Riemannian almost product manifold $(M,P,g)$ is a geometry of the
Riemannian metric $g$ and the almost product structure $P$. There
are important in this geometry the linear connections with respect
to which the parallel transport determines an isomorphism of the
tangent spaces of the manifold $M$ with structures $g$ and $P$.
Such connections are called natural in \cite{Mi}. They are an
analogue of the Hetmitian connections in almost Hermitian
geometry.

If the almost product structure $P$ is integrable, \ie the torsion
tensor of $P$ (the Nijenhuis tensor) is zero, then the manifold
$(M,P,g)$ is called a Riemannian product manifold.
In the present work we study a natural connection $D$  on a manifold  $(M,P,g)$
belonging to the largest class of Riemannian product manifolds,
which is closed with respect to the group of the conformal transformations
of the metric $g$. This is the class $\W_1$ of the classification in \cite{Sta-Gri}.

The present paper is organized as follows.
In Section~\ref{sec1} we give some necessary facts about the
Riemannian almost product manifolds and the natural connections on
them.
In Section~\ref{sec2} we consider a natural connection $D$ from
the 2-parametric family of all natural connections on
$(M,P,g)\in\W_1$ obtained in \cite{Dobr}. We find a relation
between the curvature tensors of the Levi-Civita connection $\n$
and the considered connection $D$. As a corollary we obtain a
relation between the Ricci tensors and between the scalar
curvatures for $\n$ and $D$.
In Section~\ref{sec3} we find necessary and sufficient conditions
for a connection $D$ whose curvature tensor is a Riemannian
$P$-tensor. The notion of a Riemannian $P$-tensor is an analogue
of the notion of a K\"ahler tensor in Hermitian geometry.
In Section~\ref{sec4} we consider the case of a connection $D$
with parallel torsion.
In Section~\ref{sec5} we establish that the Weyl tensors for $\n$
and $D$ coincide.
In Section~\ref{sec6} we prove that the curvature tensor of $D$ is
invariant with respect to the usual conformal transformation of
the metric $g$.
In Section~\ref{sec7} we consider the case of flat connection $D$.
In Section~\ref{sec8} we construct an example of the considered
manifold by a Lie group, where $D$ is a flat connection with
non-parallel torsion.


\section{Preliminaries}\label{sec1}

Let $(M,P,g)$ be a \emph{Riemannian almost product manifold},
\ie{} a differentiable manifold $M$ with a tensor field $P$ of
type $(1,1)$ and a Riemannian metric $g$ such that
\begin{equation*}\label{2.1}
    P^2x=x,\quad g(Px,Py)=g(x,y)
\end{equation*}
for arbitrary $x$, $y$ of the algebra $\X(M)$ of the smooth vector
fields on $M$. Obviously $g(Px,y)=g(x,Py)$.

Further $x,y,z,w$ will stand for arbitrary elements of $\X(M)$ or
vectors in the tangent space $T_pM$ at $p\in M$.

In this work we consider Riemannian almost product manifolds with
$\tr{P}=0$. In this case $(M,P,g)$ is an even-dimensional
manifold. Let $\dim{M}$ be $2n$. Then the \emph{associated metric}
$\tilde{g}$ of $g$, determined by $\tilde{g}(x,y)=g(x,Py)$, is an
indefinite metric of signature $(n,n)$. We suppose that $\dim
M\geq 4$.

In \cite{Nav} A.M.~Naveira gives a classification of Riemannian
almost product manifolds with respect to the tensor $F$ of type
(0,3), defined by
\begin{equation}\label{2.2}
F(x,y,z)=g\left(\left(\nabla_x P\right)y,z\right),
\end{equation}
where $\n$ is the Levi-Civita connection of $g$. The tensor $F$
has the following properties:
\begin{equation*}\label{2.3}
\begin{array}{c}
    F(x,y,z)=F(x,z,y)=-F(x,Py,Pz),\\[4pt]
    F(x,y,Pz)=-F(x,Py,z).
\end{array}
\end{equation*}

Using the Naveira classification, in \cite{Sta-Gri} M.~Staikova
and K.~Gribachev give a classification of Riemannian almost
product manifolds $(M,P,g)$ with $\tr P=0$. The basic classes of
the classification in \cite{Sta-Gri} are $\W_1$, $\W_2$ and
$\W_3$. Their intersection is the class $\W_0$ of the
\emph{Riemannian $P$-manifolds}, determined by the condition $F=0$
or equivalently $\n P=0$ \cite{Sta87}. This class is the analogue
of the class of K\"ahler manifolds in the geometry of almost
Hermitian manifolds.

A Riemannian almost product manifold $(M,P,g)$ is a
\emph{Riemannian product manifold} if it has a local product
structure. This means that the almost product structure $P$ is
integrable, \ie the Nijenhuis tensor $N$ determined by
\begin{equation*}\label{2.3'}
    N(x,y)=[Px,Py]+[x,y]-P[Px,y]-P[x,Py]
\end{equation*}
is zero. The Riemannian product manifolds form the class
$\W_1\oplus\W_2$ from the classification in \cite{Sta-Gri}. This
class is an analogue of Hermitian manifolds in almost Hermitian
geometry.

The class $\W_1$ from the classification in \cite{Sta-Gri} consist
of the manifolds which are locally conformal equivalent to
Riemannian $P$-manifolds. This class plays a similar role of the
role of the class of the conformal K\"ahler manifolds in almost
Hermitian geometry \cite{Gray-Her}. The characteristic condition
for the class $\W_1$ is the following
\begin{equation}\label{2.4}
\begin{split}
& F(x,y,z)=\frac{1}{2n}\big\{ g(x,y)\ta (z)+g(x,z)\ta (y)-g(x,Py)\ta (Pz) \big.\\
& \phantom{F(x,y,z)=\frac{1}{2n}\big\{g(x,y)\ta (z)+g(x,z)\ta (y)
\big.} -g(x,Pz)\ta (Py)\big\},
\end{split}
\end{equation}
where $\ta$ is the associated Lee 1-form for $F$ determined by
\begin{equation}\label{2.4'}
\ta(x)=g^{ij}F(e_i,e_j,x).
\end{equation}
Here and further $g^{ij}$ will stand for the components of the
inverse matrix of $g$ with respect to a basis $\{e_i\}$ of $T_pM$
at $p\in M$.

The curvature tensor $R$ of $\n$ is determined by
$R(x,y)z=\nabla_x \nabla_y z - \nabla_y \nabla_x z -
    \nabla_{[x,y]}z$ and the corresponding tensor of type (0,4) is defined as
follows $R(x,y,z,w)=g(R(x,y)z,w)$. We denote the Ricci tensor and
the scalar curvature for $\n$ by $\rho$ and $\tau$, respectively,
\ie $\rho(y,z)=g^{ij}R(e_i,y,z,e_j)$ and
$\tau=g^{ij}\rho(e_i,e_j)$.

In \cite{Mek1}, a tensor $L$ of type (0,4) with pro\-per\-ties%
\begin{equation}\label{2.5}
L(x,y,z,w)=-L(y,x,z,w)=-L(x,y,w,z),
\end{equation}
\begin{equation}\label{2.6}
\mathop{\s} \limits_{x,y,z} L(x,y,z,w)=0,
\end{equation}
\begin{equation}\label{2.7}
L(x,y,Pz,Pw)=L(x,y,z,w),
\end{equation}
is called a \emph{Riemannian $P$-tensor}. This notion is an
analogue of the notion of a K\"ahler tensor in Hermitian geometry.

The linear connections in our investigations have a torsion.

Let $\nn$ be a linear connection with a tensor $Q$ of the
transformation $\n \rightarrow\nn$ and a torsion $T$, \ie{}
\begin{equation}\label{2.8}
\nn_x y=\n_x y+Q(x,y),\quad T(x,y)=\nn_x y-\nn_y x-[x,y].
\end{equation}
The corresponding (0,3)-tensors are defined by
\begin{equation}\label{2.9}
    Q(x,y,z)=g(Q(x,y),z), \quad T(x,y,z)=g(T(x,y),z).
\end{equation}
The symmetry of the Levi-Civita connection implies
\begin{equation}\label{2.10}
    T(x,y)=Q(x,y)-Q(y,x),
\end{equation}
\begin{equation}\label{2.11}
    T(x,y)=-T(y,x).
\end{equation}

\begin{defn}[\cite{Mi}]\label{defn-2.1}
A linear connection $\nn$ on a Riemannian almost product manifold
$(M,P,g)$ is called a \emph{natural connection} if $\nn P=\nn
g=0$.
\end{defn}

If $\nn$ is a linear connection with a tensor $Q$ of the
transformation $\n \rightarrow\nn$ on a Riemannian almost product
manifold, then it is  a natural connection if and only if the
following conditions are valid \cite{Mi}:
\begin{equation*}\label{2.13}
    F(x,y,z)=Q(x,y,Pz)-Q(x,Py,z),
\end{equation*}
\begin{equation}\label{2.14}
    Q(x,y,z)=-Q(x,z,y).
\end{equation}

Let $R'$ be the curvature tensor of a natural connection $\nn$ on
Riemannian almost product manifold $(M,P,g)$. Then, according to
the definitional equalities for $R$ and $R'$, bearing in mind $\nn
g=0$ and equalities \eqref{2.8}, \eqref{2.9}, \eqref{2.10},
\eqref{2.11} and \eqref{2.14}, we have the following relation for
$R$ and $R'$:
\begin{equation}\label{2.15}
\begin{split}
R(x,y,z,w)&=R'(x,y,z,w)-Q(T(x,y),z,w)\\[4pt]
&- \left(\nn_x Q\right)(y,z,w)+\left(\n_yQ\right)(x,z,w)\\[4pt]
&+g\bigl(Q(x,z),Q(y,w)\bigr)-g\bigl(Q(y,z),Q(x,w)\bigr).
\end{split}
\end{equation}


\section{A natural connection $D$ on $(M,P,g)\in\W_1$}\label{sec2}

In the classification of Riemannian almost product manifolds given
in \cite{Sta-Gri}, the class $\W_1$ is the only class, where the
basic tensor $F$ is expressed in terms of the tensor
$g\otimes\ta$, namely \eqref{2.4}.

Let $T$ be the torsion of an arbitrary natural connection on a
Riemannian product manifold $(M,P,g)\in\W_1$. In \cite{Dobr}, it
is obtained the following expression of $T$ by $g\otimes\ta$:
\begin{equation}\label{3.1}
\begin{split}
    &T(x,y,z)=\frac{1}{2n}\left\{g(y,z)\ta(Px)-g(x,z)\ta(Py)\right\}\\[4pt]
            &+\lm\left\{g(y,z)\ta(x)-g(x,z)\ta(y)+g(y,Pz)\ta(Px)-g(x,Pz)\ta(Py)\right\}\\[4pt]
            &+\mu\left\{g(y,Pz)\ta(x)-g(x,Pz)\ta(y)+g(y,z)\ta(Px)-g(x,z)\ta(Py)\right\},
\end{split}
\end{equation}
where $\lm,\mu\in\R$. Let us note that for $\lm=0$ and
$\mu=-\frac{1}{4n}$ we have the torsion of the canonical
connection investigated in \cite{Sta-Gri}. The canonical
connection on an arbitrary Riemannian almost product manifold is
introduced in \cite{Mi} as an analogue of the Hermitian connection
in Hermitian geometry (\cite{Li-2}, \cite{Li-1}, \cite{Ya}).

The goal of the present work is the investigation of the natural
connection $D$ whose torsion $T$ is determined by \eqref{3.1} for
$\lm=\mu=0$, \ie
\begin{equation}\label{3.2}
    T(x,y,z)=\frac{1}{2n}\left\{g(y,z)\ta(Px)-g(x,z)\ta(Py)\right\}.
\end{equation}

\begin{prop}\label{prop-3.1}
    The connection $D$ is determined by the following equality
    \begin{equation}\label{3.3}
    g\left(D_xy,z\right)=g\left(\n_xy,z\right)+Q(x,y,z),
    \end{equation}
    where
    \begin{equation}\label{3.4}
    Q(x,y,z)=\frac{1}{2n}\left\{g(x,y)\ta(Pz)-g(x,z)\ta(Py)\right\}.
    \end{equation}
\end{prop}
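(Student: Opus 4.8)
The plan is to read \eqref{3.3} as the statement $D=\n+Q$ with a transformation tensor $Q$, so that the proposition amounts to identifying this $Q$ explicitly as \eqref{3.4}. Since $D$ is required to be a natural connection, the condition $Dg=0$ gives the antisymmetry \eqref{2.14}, $Q(x,y,z)=-Q(x,z,y)$, and \eqref{2.10} expresses its torsion as $T(x,y,z)=Q(x,y,z)-Q(y,x,z)$. The key point is that, for a metric connection, these two relations determine $Q$ uniquely from $T$; hence it suffices to invert the prescribed torsion \eqref{3.2}.

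First I would derive the inversion formula. Applying \eqref{2.14} in the form $Q(y,x,z)=-Q(y,z,x)$ turns the torsion relation into $T(x,y,z)=Q(x,y,z)+Q(y,z,x)$. Writing this for the three cyclic permutations of $(x,y,z)$ and taking the signed sum that eliminates the cross terms, one obtains
\begin{equation*}
2Q(x,y,z)=T(x,y,z)-T(y,z,x)+T(z,x,y),
\end{equation*}
which also records that a metric connection is completely determined by its torsion.

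Next I would substitute \eqref{3.2} into this identity. Writing out the three torsion terms and using only the symmetry $g(x,y)=g(y,x)$, the two summands carrying $\ta(Px)$ cancel, while the $\ta(Py)$ and $\ta(Pz)$ summands each double, giving $2Q(x,y,z)=\tfrac1n\{g(x,y)\ta(Pz)-g(x,z)\ta(Py)\}$; dividing by two yields \eqref{3.4} and hence \eqref{3.3}. The only real obstacle is the sign-and-symmetry bookkeeping in the cyclic sum, together with a consistency check that the $Q$ so obtained is genuinely that of a natural connection: it is antisymmetric in its last two arguments (so $Dg=0$), and a short computation using $P^2=\id$ shows that $Q(x,y,Pz)-Q(x,Py,z)$ reproduces the $\W_1$ form \eqref{2.4} of $F$ (so $DP=0$), confirming that $D$ is the $\lm=\mu=0$ member of the family \eqref{3.1}.
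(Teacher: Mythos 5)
Your proposal is correct and takes essentially the same route as the paper: invert the torsion via the formula $Q(x,y,z)=\tfrac{1}{2}\left\{T(x,y,z)-T(y,z,x)+T(z,x,y)\right\}$ and substitute \eqref{3.2}, with the only difference that the paper simply cites this inversion from Hayden as \eqref{3.4'} while you derive it from \eqref{2.10} and \eqref{2.14} and additionally verify that the resulting $Q$ reproduces the $\W_1$ form \eqref{2.4} of $F$. Note that your coefficient $\tfrac{1}{2}$ is the one that actually produces \eqref{3.4} --- the factor $\tfrac{1}{2n}$ printed in the paper's \eqref{3.4'} is an evident misprint, since it would give $\tfrac{1}{2n^2}$ rather than $\tfrac{1}{2n}$ in \eqref{3.4} --- so your self-contained derivation in fact corrects this slip.
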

\begin{proof}
Since $D$ is a natural connection, according to \cite{Hay}, there
is valid the following equality for the tensor $Q$ of the
transformation $\n\rightarrow D$:
\begin{equation}\label{3.4'}
    Q(x,y,z)=\frac{1}{2n}\left\{T(x,y,z)-T(y,z,x)+T(z,x,y)\right\}.
\end{equation}
Applying \eqref{3.2} to \eqref{3.4'}, we have \eqref{3.3}.
\end{proof}

For the corresponding tensor $Q$ of type (1,2), according to
\eqref{3.4}, we obtain
\begin{equation}\label{3.5}
    Q(x,y)=\frac{1}{2n}\left\{g(x,y)P\Omega-\ta(Py)x\right\},
\end{equation}
where $\Omega$ is determined by $g(\Omega,x)=\ta(x)$. Then
\eqref{3.3} implies
\begin{equation}\label{3.6}
    D_xy=\n_xy+\frac{1}{2n}\left\{g(x,y)P\Omega-\ta(Py)x\right\},
\end{equation}
which yields the following relation between the covariant
derivatives of $\ta$ with respect $D$ and $\n$:
\begin{equation}\label{3.7}
    \left(D_x\ta\right)y=\left(\n_x\ta\right)y
    -\frac{1}{2n}\left\{g(x,y)\ta(P\Omega)-\ta(Py)\ta(x)\right\}.
\end{equation}

We have the following immediately consequence of \eqref{3.5}:
\begin{equation}\label{3.8}
    T(x,y)=\frac{1}{2n}\left\{\ta(Px)y-\ta(Py)\ta(x)\right\}.
\end{equation}

The equality \eqref{3.2} implies the properties
\begin{equation}\label{3.9}
    \mathop{\s}_{x,y,z} T(x,y,z)=\mathop{\s}_{x,y,z}T(Px,Py,z)=0,
\end{equation}
where $\mathop{\s}_{x,y,z}$ is the cyclic sum by $x, y, z$.

Equalities \eqref{3.9} and \eqref{3.4'} yield
\begin{equation*}\label{3.9'}
    Q(x,y,z)=T(z,y,x).
\end{equation*}

According to \eqref{3.8}, it is follows $\ta(PT(x,y))=0$ and then,
bearing in mind \eqref{3.2}, we obtain
\begin{equation*}
    T(T(x,y),z)=\frac{1}{4n^2}\left\{\ta(Py)\ta(Pz)x-\ta(Px)\ta(Pz)y\right\}.
\end{equation*}
Therefore we have the property
\begin{equation}\label{3.10}
    \mathop{\s}_{x,y,z} T(T(x,y),z)=0.
\end{equation}

\begin{rem}
According to property \eqref{3.10}, the case $T(x,y)=-[x,y]$, \ie
$D_xy=D_yx$, is possible (\cite{Hel}, \cite{KamTon}). Then the
manifold has a structure of a Lie group.
\end{rem}

According to \eqref{2.15}, \eqref{3.4}, \eqref{3.5} and
\eqref{3.8}, we obtain the following identity for the curvature
tensors $R$ and $R'$ of the connections $\n$ and $D$:
\begin{equation}\label{3.11}
\begin{split}
R(x,y,z,w)&=R'(x,y,z,w)-\frac{\ta(\Omega)}{4n^2}\pi_1(x,y,z,w)\\[4pt]
&-\frac{1}{2n}\left\{g(y,z)\left(D_x\ta\right)Pw-g(x,z)\left(D_y\ta\right)Pw\right.\\[4pt]
&\phantom{-\frac{1}{2n}\left\{\right.}
\left.+g(x,w)\left(D_y\ta\right)Pz-g(y,w)\left(D_x\ta\right)Pz\right\},
\end{split}
\end{equation}
where
\begin{equation}\label{3.12}
    \pi_1(x,y,z,w)=g(y,z)g(x,w)-g(x,z)g(y,w).
\end{equation}

Since $\mathop{\s}_{x,y,z} R(x,y,z,w)=0$ and $\mathop{\s}_{x,y,z}
\pi_1(x,y,z,w)=0$, then equality \eqref{3.11} implies
\begin{equation}\label{3.12'}
    \mathop{\s}_{x,y,z} R'(x,y,z,w)=\frac{1}{2n}
    \mathop{\s}_{x,y,z}
    g(x,w)\left\{\left(D_y\ta\right)Pz-\left(D_z\ta\right)Py\right\}.
\end{equation}

In \cite{Sta-Gri}, it is introduced the following curvature-like
tensor $\psi_1$ on a Riemannian product manifold
\begin{equation}\label{3.13}
\begin{split}
    \psi_1(S)(x,y,z,w)&=g(y,z)S(x,w)-g(x,z)S(y,w)\\[4pt]
                    &+S(y,z)g(x,w)-S(x,z)g(y,w),
\end{split}
\end{equation}
where
\begin{equation}\label{3.14}
    S(x,y)=\left(D_x\ta\right)Py+\frac{\ta(\Omega)}{2n}g(x,y).
\end{equation}

Using \eqref{3.11}, \eqref{3.12}, \eqref{3.13} and \eqref{3.14},
we obtain the following
\begin{thm}\label{thm-3.2}
The curvature tensors $R$ and $R'$ of $\n$ and $D$ are related via
the formula
\begin{equation}\label{3.15}
    R(x,y,z,w)=R'(x,y,z,w)-\frac{1}{2n}\psi_1(S)(x,y,z,w).
\end{equation}
\qed
\end{thm}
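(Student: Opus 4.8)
The identity \eqref{3.15} is a purely algebraic repackaging of the relation \eqref{3.11}, so the plan is to start from \eqref{3.11} and show that its two correction terms — the $\pi_1$-term and the braced $D\ta$-term — are exactly $\frac{1}{2n}\psi_1(S)$. The cleanest route is to expand the right-hand side of \eqref{3.15}: substitute the definition \eqref{3.14} of $S$ into the definition \eqref{3.13} of $\psi_1$, which splits $\psi_1(S)$ as a sum of two curvature-like tensors, one linear in the Hessian-type quantity $(D_{\cdot}\ta)P{\cdot}$ and one proportional to $\ta(\Omega)\,g\otimes g$. Writing out the four terms of \eqref{3.13} separately for each of these two pieces is the only computation needed.

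For the first piece, the four resulting terms are $g(y,z)(D_x\ta)Pw$, $-g(x,z)(D_y\ta)Pw$, $g(x,w)(D_y\ta)Pz$ and $-g(y,w)(D_x\ta)Pz$; after multiplication by $\frac{1}{2n}$ these reproduce, term by term, the braced expression in \eqref{3.11}. Here one only matches each summand against its partner, using the symmetry of $g$ so that, for instance, $S(y,z)g(x,w)$ contributes $g(x,w)(D_y\ta)Pz$. For the second piece, the four $g\otimes g$ contributions coming from the $\frac{\ta(\Omega)}{2n}g$ part of $S$ collapse, via $\pi_1(x,y,z,w)=g(y,z)g(x,w)-g(x,z)g(y,w)$ from \eqref{3.12}, onto a single scalar multiple of $\pi_1$, which is then compared against the coefficient $\frac{\ta(\Omega)}{4n^2}$ appearing in \eqref{3.11}.

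Finally I would add the two pieces and read off \eqref{3.15} from \eqref{3.11}, the overall minus sign being inherited from the way the corrections enter \eqref{3.11}. There is no conceptual obstacle: the entire content lies in the bookkeeping. The one point that requires genuine care is the numerical coefficient of the $\pi_1$-term, since all four terms of $\psi_1(S)$ feed into it and a factor is easily lost or doubled; verifying that this scalar coefficient matches the one in \eqref{3.11} is therefore the step I would check most carefully, as it is precisely what pins down the constant $\frac{1}{2n}$ in front of $\psi_1(S)$.
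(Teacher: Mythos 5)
Your strategy is the paper's own: the proof of \thmref{thm-3.2} is nothing more than substituting \eqref{3.14} into \eqref{3.13} and comparing with \eqref{3.11}, and your treatment of the first piece is correct --- $\psi_1$ applied to the tensor $(x,y)\mapsto(D_x\ta)Py$ reproduces, term by term, the braced expression in \eqref{3.11} after multiplication by $-\frac{1}{2n}$. The gap is that you never carry out the one computation you yourself single out as decisive, and that computation, done with the formulas as printed, does not close. Since $\psi_1(g)=2\pi_1$ (an identity the paper itself invokes in the proof of \thmref{thm-6.1}), the $\frac{\ta(\Omega)}{2n}g$ part of $S$ from \eqref{3.14} contributes $-\frac{1}{2n}\cdot\frac{\ta(\Omega)}{2n}\cdot 2\pi_1=-\frac{\ta(\Omega)}{2n^2}\pi_1$ to the right-hand side of \eqref{3.15}, whereas \eqref{3.11} carries $-\frac{\ta(\Omega)}{4n^2}\pi_1$: a mismatch by a factor of $2$. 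Asserting that ``there is no conceptual obstacle'' while deferring exactly this coefficient check means your proposal would ``verify'' the identity by omitting the step at which it fails.

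The discrepancy cannot be absorbed by more careful bookkeeping; it is an inconsistency in the paper between \eqref{3.14} and \eqref{3.11}, and the evidence says \eqref{3.11} is the correct relation. It is what \eqref{5.1} and \thmref{thm-8.2}(i) rely on, and it is confirmed by the example of Section~\ref{sec8}: there $R'=0$, $n=2$, $\ta(\Omega)=16(\lm_1^2+\lm_2^2+\lm_3^2+\lm_4^2)$, and \eqref{3.11} yields $R_{1212}=\lm_1^2+\lm_2^2$ in agreement with \eqref{9.8}, while \eqref{3.15} with $S$ as printed in \eqref{3.14} yields $2\lm_1^2+2\lm_2^2+\lm_3^2+\lm_4^2$. (The same clash appears internally: with parallel torsion one has $D\ta=0$, so \eqref{3.14} gives $S=\frac{\ta(\Omega)}{2n}g$ and \eqref{3.15} would give $R=R'-\frac{\ta(\Omega)}{2n^2}\pi_1$, contradicting \eqref{5.1}.) Thus \eqref{3.15} holds only if \eqref{3.14} is corrected to $S(x,y)=(D_x\ta)Py+\frac{\ta(\Omega)}{4n}g(x,y)$. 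A sound write-up of your argument must actually perform the $\pi_1$-coefficient comparison, detect the factor of $2$, and either adopt this corrected $S$ or flag the contradiction; as it stands, the proposal is incomplete precisely where the claimed identity breaks.
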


\begin{cor}\label{cor-3.3}
The Ricci tensors $\rho$ and $\rho'$ as well as the scalar
curvatures $\tau$ and $\tau'$ for $\n$ and $D$, respectively, are
related as follows
\begin{gather}
    \rho(y,z)=\rho'(y,z)-\frac{1}{2n}\left\{g(y,z)\tr S+2(n-1)S(y,z)\right\},\label{3.16}\\[4pt]
    \tau=\tau'-\frac{2n-1}{n}\tr S.\label{3.17}
\end{gather}
\qed
\end{cor}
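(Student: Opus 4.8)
The plan is to derive both identities directly from the fundamental curvature relation \eqref{3.15} of \thmref{thm-3.2} by successive metric contractions; no new geometric input is required beyond the definitions of the Ricci tensor and the scalar curvature, which is why this is stated as a corollary.

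First I would contract \eqref{3.15} over its first and fourth arguments with $g^{ij}$, setting $x=e_i$ and $w=e_j$. By the definitions $\rho(y,z)=g^{ij}R(e_i,y,z,e_j)$ and $\rho'(y,z)=g^{ij}R'(e_i,y,z,e_j)$, this immediately gives
\begin{equation*}
    \rho(y,z)=\rho'(y,z)-\frac{1}{2n}\,g^{ij}\psi_1(S)(e_i,y,z,e_j).
\end{equation*}

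The central computation is the contraction of $\psi_1(S)$ from its definition \eqref{3.13}. The four summands contract as follows: the term $g(y,z)S(e_i,e_j)$ gives $g(y,z)\tr S$; the term $S(y,z)g(e_i,e_j)$ gives $2n\,S(y,z)$, since $g^{ij}g(e_i,e_j)=2n$ with $\dim M=2n$; and the two mixed terms $-g(e_i,z)S(y,e_j)$ and $-S(e_i,z)g(y,e_j)$ each collapse to $-S(y,z)$, using the index-raising identities $g^{ij}g(e_i,z)=z^{j}$ and $g^{ij}g(y,e_j)=y^{i}$. Summing yields $g^{ij}\psi_1(S)(e_i,y,z,e_j)=g(y,z)\tr S+2(n-1)S(y,z)$, which is exactly \eqref{3.16}. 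I note that the (possible lack of) symmetry of $S$ plays no role in this step, since the index-raising collapses each mixed term to the same $S(y,z)$ regardless.

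Finally, to obtain \eqref{3.17} I would contract \eqref{3.16} once more with $g^{ij}$ over $y$ and $z$. Using $g^{ij}g(e_i,e_j)=2n$ and $g^{ij}S(e_i,e_j)=\tr S$, the bracketed expression becomes $2n\,\tr S+2(n-1)\tr S=(4n-2)\tr S$; dividing by $2n$ produces the coefficient $\frac{2n-1}{n}$ and gives $\tau=\tau'-\frac{2n-1}{n}\tr S$. The only obstacle throughout is careful bookkeeping in the contraction of $\psi_1(S)$, which is entirely routine once the index-raising identities are applied.
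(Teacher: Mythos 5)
Your proposal is correct and follows exactly the route the paper intends: the corollary carries no written proof (it is marked \qed as an immediate consequence of \thmref{thm-3.2}), and the intended argument is precisely your contraction of \eqref{3.15} with $g^{ij}$ over the first and fourth slots, using $g^{ij}g(e_i,e_j)=2n$, followed by a second contraction for \eqref{3.17}. Your bookkeeping for the four terms of $\psi_1(S)$ is right, including the observation that no symmetry of $S$ is needed since each mixed term collapses to $-S(y,z)$.
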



\section{Connection $D$ with Riemannian $P$-tensor of curvature}\label{sec3}

Conditions \eqref{2.5} are satisfies for the curvature tensor $R'$
of the connection $D$ because of \eqref{3.11}. Moreover, condition
\eqref{2.7} is also valid for $R'$, because of $DP=0$. Therefore,
$R'$ is a Riemannian $P$-tensor if and only if $R'$ satisfies
condition \eqref{2.6}.
\begin{thm}\label{thm-4.1}
The connection $D$ has a Riemannian $P$-tensor of curvature if and
only if the following condition is valid
\begin{equation}\label{4.3}
    \left(D_y\ta\right)Pz=\left(D_z\ta\right)Py.
\end{equation}
\end{thm}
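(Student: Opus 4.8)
The plan is to build directly on the reduction recorded just before the statement: since $R'$ already satisfies \eqref{2.5} and (because $DP=0$) also \eqref{2.7}, being a Riemannian $P$-tensor is equivalent to the single remaining condition \eqref{2.6}, i.e. to the vanishing of the cyclic sum $\mathop{\s}_{x,y,z} R'(x,y,z,w)$. The crucial input is the already-derived formula \eqref{3.12'}, which expresses exactly this cyclic sum in terms of the quantity $\left(D_y\ta\right)Pz-\left(D_z\ta\right)Py$ appearing in \eqref{4.3}. Thus the whole theorem reduces to relating the vanishing of the left-hand side of \eqref{3.12'} to the vanishing of that quantity.

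The forward implication is then immediate. If \eqref{4.3} holds, then $\left(D_y\ta\right)Pz-\left(D_z\ta\right)Py=0$ for all $y,z$, so every summand on the right-hand side of \eqref{3.12'} vanishes; hence $\mathop{\s}_{x,y,z} R'(x,y,z,w)=0$ and $R'$ is a Riemannian $P$-tensor by the reduction above.

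For the converse, assume $R'$ is a Riemannian $P$-tensor, so the left-hand side of \eqref{3.12'} is zero. Writing $A(y,z)=\left(D_y\ta\right)Pz-\left(D_z\ta\right)Py$, which is manifestly antisymmetric, this says that for all $x,y,z,w$
\begin{equation*}
g(x,w)A(y,z)+g(y,w)A(z,x)+g(z,w)A(x,y)=0.
\end{equation*}
To isolate $A$ I would contract this identity over the pair $(x,w)$ with $g^{ij}$ (set $x=e_i$, $w=e_j$ and sum). The first term gives $2n\,A(y,z)$ since $g^{ij}g(e_i,e_j)=2n$, while the other two terms each collapse to $A(z,y)$ via the expansion $v=g^{ij}g(v,e_j)e_i$; using $A(z,y)=-A(y,z)$ this yields $(2n-2)A(y,z)=0$. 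The only delicate point — and the place where the standing hypothesis $\dim M=2n\geq 4$ enters — is that this coefficient $2n-2$ is nonzero, so that $A(y,z)=0$ follows and the equivalence is genuinely reversible. This $A(y,z)=0$ is precisely \eqref{4.3}, completing the plan.
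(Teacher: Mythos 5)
Your proof is correct and takes essentially the same route as the paper: both reduce the theorem, via the remarks preceding the statement, to condition \eqref{2.6} for $R'$ and then apply formula \eqref{3.12'} in both directions. The only difference is that you make explicit—by contracting the cyclic identity with $g^{ij}$ over the pair $(x,w)$ to obtain $(2n-2)A(y,z)=0$—the linear-algebra step that the paper merely asserts with the phrase ``Since we have supposed that $\dim M\geq 4$, then \eqref{4.2} implies \eqref{4.3}.''
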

\begin{proof}
Let the curvature tensor $R'$ of the connection $D$ be a
Riemannian $P$-tensor. Then \eqref{2.6} is valid for $R'$ and
because of \eqref{3.12'} we have
\begin{equation}\label{4.2}
        \mathop{\s}_{x,y,z}
    g(x,w)\left\{\left(D_y\ta\right)Pz-\left(D_z\ta\right)Py\right\}=0.
\end{equation}
Since we have supposed that $\dim M\geq 4$, then \eqref{4.2}
implies \eqref{4.3}.

Vice versa, if \eqref{4.3} is satisfied, then \eqref{3.12'} yields
condition \eqref{2.6} for $R'$ and therefore $R'$ is a Riemannian
$P$-tensor.
\end{proof}

Bearing in mind \eqref{4.3} and \eqref{3.7}, we have immediately
the following
\begin{cor}\label{cor-4.2}
The connection $D$ has a Riemannian $P$-tensor of curvature if and
only if the following condition is valid
\begin{equation*}
    \left(\n_x\ta\right)Py=\left(\n_y\ta\right)Px,
\end{equation*}
\ie if and only if the 1-form $\ta\circ P$ is closed. \qed
\end{cor}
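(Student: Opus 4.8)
The plan is to obtain both equivalences in the statement from \thmref{thm-4.1}, whose criterion \eqref{4.3} reads $(D_y\ta)Pz = (D_z\ta)Py$, by trading the connection $D$ for the Levi-Civita connection $\n$ via \eqref{3.7}. First I would substitute $Pz$, respectively $Py$, for the second argument in \eqref{3.7}, expressing $(D_y\ta)Pz$ and $(D_z\ta)Py$ through $(\n_y\ta)Pz$ and $(\n_z\ta)Py$ together with correction terms; here I use $P^2=\id$ to simplify $\ta(P\cdot Pz)=\ta(z)$ and $\ta(P\cdot Py)=\ta(y)$.

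The key point of this first step is that the correction terms are symmetric under interchange of the two slots, so they cancel in the difference $(D_y\ta)Pz-(D_z\ta)Py$. Indeed, $g(y,Pz)\ta(P\Omega)$ matches $g(z,Py)\ta(P\Omega)$ because $g(y,Pz)=g(z,Py)$ by $g(Px,y)=g(x,Py)$ and the symmetry of $g$, while the quadratic terms $\ta(z)\ta(y)$ and $\ta(y)\ta(z)$ agree trivially. Hence \eqref{4.3} is equivalent to $(\n_y\ta)Pz=(\n_z\ta)Py$, which after relabelling is exactly the stated identity $(\n_x\ta)Py=(\n_y\ta)Px$; combined with \thmref{thm-4.1} this gives the first equivalence.

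For the second equivalence I would read $(\n_x\ta)Py=(\n_y\ta)Px$ as the closedness of the $1$-form $\ta\circ P$. Since $\n$ is torsion-free, $d(\ta\circ P)(x,y)=\bigl(\n_x(\ta\circ P)\bigr)y-\bigl(\n_y(\ta\circ P)\bigr)x$, and expanding $\bigl(\n_x(\ta\circ P)\bigr)y=(\n_x\ta)Py+\ta\bigl((\n_x P)y\bigr)$ shows that $d(\ta\circ P)(x,y)$ equals $(\n_x\ta)Py-(\n_y\ta)Px$ plus the extra term $\ta\bigl((\n_x P)y\bigr)-\ta\bigl((\n_y P)x\bigr)$.

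The main obstacle, and the only nonroutine point, is to show this extra term vanishes. I expect to handle it by rewriting $\ta\bigl((\n_x P)y\bigr)=F(x,y,\Omega)$ via \eqref{2.2} and $g(\Omega,x)=\ta(x)$, and then invoking the defining relation \eqref{2.4} of the class $\W_1$: putting $z=\Omega$ there and using $g(x,\Omega)=\ta(x)$ and $g(x,P\Omega)=\ta(Px)$, all four summands become symmetric in $x$ and $y$, so $F(x,y,\Omega)=F(y,x,\Omega)$. Consequently the extra term is zero, $d(\ta\circ P)(x,y)=(\n_x\ta)Py-(\n_y\ta)Px$, and the vanishing of the left-hand side is equivalent to the identity already obtained, closing the chain of equivalences.
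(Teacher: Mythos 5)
Your proof is correct and follows the paper's own route: the paper derives the corollary ``immediately'' from \thmref{thm-4.1} together with \eqref{3.7}, which is exactly your first step of substituting $Pz$ (resp.\ $Py$) into \eqref{3.7} and observing that the correction terms $g(y,Pz)\,\ta(P\Omega)$ and $\ta(y)\ta(z)$ are symmetric in the two slots, so they cancel and \eqref{4.3} becomes $\left(\nabla_x\ta\right)Py=\left(\nabla_y\ta\right)Px$. Your second step --- identifying this symmetry with closedness of $\ta\circ P$ by expanding $\bigl(\nabla_x(\ta\circ P)\bigr)y=(\nabla_x\ta)Py+F(x,y,\Omega)$ and checking via \eqref{2.4} with $z=\Omega$ that $F(x,y,\Omega)$ is symmetric in $x$ and $y$ on a $\W_1$-manifold --- is a correct verification of a point the paper treats as immediate and leaves unwritten, and it is a worthwhile addition since the naive formula $\dd(\ta\circ P)(x,y)=(\nabla_x\ta)Py-(\nabla_y\ta)Px$ genuinely needs the vanishing of that $F$-term.
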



\section{Connection $D$ with parallel torsion}\label{sec4}

Equality \eqref{3.8} implies
\begin{equation*}
        \left(D_xT\right)(y,z)=\frac{1}{2n}\left\{\left(D_x\ta\right)Py.z-\left(D_x\ta\right)Pz.y\right\},
\end{equation*}
which yields immediately the following
\begin{prop}\label{prop-5.1}
The connection $D$ has parallel torsion if and only if the
associated Lee 1-form $\ta$ is also parallel with respect to $D$.
\qed
\end{prop}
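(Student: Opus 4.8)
The claim to prove is \propref{prop-5.1}: the connection $D$ has parallel torsion ($DT=0$) if and only if the Lee form $\theta$ is parallel ($D\theta=0$). The excerpt hands me the crucial computation right before the statement, namely the formula
\[
\left(D_xT\right)(y,z)=\frac{1}{2n}\left\{\left(D_x\ta\right)Py.z-\left(D_x\ta\right)Pz.y\right\},
\]
so my plan is essentially to read off both implications from this identity. The backward direction is immediate: if $D\theta=0$, then $(D_x\ta)Py = 0$ and $(D_x\ta)Pz = 0$ for all arguments, so the right-hand side vanishes identically, giving $DT=0$.

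The forward direction is the one requiring a genuine argument. Assuming $\left(D_xT\right)(y,z)=0$ for all $x,y,z$, I want to conclude that $(D_x\ta)Py=0$ for all $x,y$. The plan is to exploit the vector-valued structure: the right-hand side is an $\X(M)$-valued expression, namely $(D_x\ta)(Py)\,z-(D_x\ta)(Pz)\,y$, which I set to zero. The idea is to fix $x$ and treat $\phi(\cdot):=(D_x\ta)(P\,\cdot)$ as a 1-form in its remaining slot; the vanishing condition reads $\phi(y)z=\phi(z)y$ for all $y,z$. Since $\dim M\geq 4 > 1$, I can choose $z$ linearly independent from $y$: taking any $y$ with $\phi(y)\neq 0$ and a $z$ not proportional to $y$ forces $\phi(y)z-\phi(z)y$ to be a nontrivial combination of two independent vectors, hence nonzero, a contradiction. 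Therefore $\phi\equiv 0$, i.e. $(D_x\ta)(Py)=0$ for every $y$. Because $P$ is an isomorphism (indeed $P^2=\id$), replacing $y$ by $Py$ gives $(D_x\ta)y=0$ for all $y$, so $D\theta=0$.

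The single step that carries the real content is this linear-algebra observation that $\phi(y)z=\phi(z)y$ forces $\phi=0$ when the dimension exceeds one; everything else is substitution. I anticipate the main obstacle to be stating this cleanly rather than computationally: one must be careful that the expression is genuinely vector-valued (not scalar) so that linear independence of $y$ and $z$ can be invoked, and one must remember to undo the twist by $P$ using $P^2=\id$ at the end to pass from $(D_x\ta)\circ P=0$ back to $D_x\ta=0$. Since the key derivative formula is already supplied and $P$ is invertible, no further structural input (such as the $\W_1$ condition \eqref{2.4} or naturality) is needed beyond what has been established, and the proof reduces to these two short implications.
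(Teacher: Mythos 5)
Your proof is correct and follows the paper's route exactly: the paper derives the same identity $\left(D_xT\right)(y,z)=\frac{1}{2n}\left\{\left(D_x\ta\right)(Py)\,z-\left(D_x\ta\right)(Pz)\,y\right\}$ from \eqref{3.8} and declares the equivalence immediate, leaving implicit precisely the linear-independence argument (valid since $\dim M\geq 4$) and the invertibility of $P$ that you spell out. Your write-up simply makes the paper's ``immediately'' explicit, and both steps are sound.
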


Because of \eqref{3.7}, it is valid the following
\begin{cor}\label{cor-5.2}
The connection $D$ has parallel torsion if and only if the
following condition is satisfied
\begin{equation*}
        \left(\n_x\ta\right)y=\frac{1}{2n}\left\{g(x,y)\ta(P\Omega)-\ta(Py)\ta(x)\right\}.
\end{equation*}
\qed
\end{cor}

Let $D$ have a parallel torsion. Then, according to \eqref{3.11}
we obtain
\begin{equation}\label{5.1}
    R(x,y,z,w)=R'(x,y,z,w)-\frac{\ta(\Omega}{4n^2}\pi_1(x,y,z,w),
\end{equation}
which implies condition \eqref{2.6} for $R'$. Therefore, it is
valid the following
\begin{prop}\label{prop-5.3}
If the connection $D$ has parallel torsion then the curvature
tensor of $D$ is a Riemannian $P$-tensor which satisfies condition
\eqref{5.1}. \qed
\end{prop}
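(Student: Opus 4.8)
The plan is to show that Proposition~\ref{prop-5.3} follows almost directly from the formula in Theorem~\ref{thm-3.2} once the hypothesis of parallel torsion is invoked. By Proposition~\ref{prop-5.1}, the assumption that $D$ has parallel torsion is equivalent to $D\ta=0$, so the first step is to substitute $\left(D_x\ta\right)Pw=0$, $\left(D_y\ta\right)Pw=0$, $\left(D_y\ta\right)Pz=0$ and $\left(D_x\ta\right)Pz=0$ into the expanded identity \eqref{3.11} relating $R$ and $R'$. This annihilates the entire bracketed term on the last two lines of \eqref{3.11}, leaving only the contribution proportional to $\pi_1$, which yields \eqref{5.1}.

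Having obtained \eqref{5.1}, the second step is to verify that $R'$ satisfies the first Bianchi-type identity \eqref{2.6}, namely $\mathop{\s}_{x,y,z} R'(x,y,z,w)=0$. I would argue this directly: the Levi-Civita curvature $R$ satisfies $\mathop{\s}_{x,y,z} R(x,y,z,w)=0$, and the tensor $\pi_1$ satisfies $\mathop{\s}_{x,y,z}\pi_1(x,y,z,w)=0$ (already noted in the excerpt just before \eqref{3.12'}). Since \eqref{5.1} expresses $R'$ as $R$ plus a scalar multiple of $\pi_1$, cyclic summation of both sides forces $\mathop{\s}_{x,y,z} R'(x,y,z,w)=0$ as well. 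Alternatively, one can simply note that under the parallel-torsion hypothesis condition \eqref{4.3} holds trivially (both sides vanish), so Theorem~\ref{thm-4.1} immediately gives that $R'$ is a Riemannian $P$-tensor.

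The third and final step is to package these observations into the statement. Conditions \eqref{2.5} and \eqref{2.7} for $R'$ were already established at the opening of Section~\ref{sec3} (from \eqref{3.11} and $DP=0$ respectively), and we have now verified \eqref{2.6}; hence $R'$ is a Riemannian $P$-tensor by definition, and it satisfies \eqref{5.1} by the first step. This completes the proof.

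I do not expect any serious obstacle here, since every ingredient is already available in the excerpt. The only point requiring mild care is bookkeeping: one must confirm that the two skew-symmetry conditions \eqref{2.5} and the product-compatibility condition \eqref{2.7} genuinely hold for $R'$ without reusing the parallel-torsion hypothesis, so that the logical structure ``$R'$ is a Riemannian $P$-tensor'' is correctly assembled from the pieces proved in Section~\ref{sec3} together with the new vanishing obtained here. The substitution into \eqref{3.11} is the substantive but routine computational core.
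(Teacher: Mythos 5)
Your proposal is correct and follows essentially the same route as the paper: substituting $D\ta=0$ (via Proposition~\ref{prop-5.1}) into \eqref{3.11} to annihilate the bracketed terms and obtain \eqref{5.1}, then deducing \eqref{2.6} from the cyclic symmetries of $R$ and $\pi_1$, with \eqref{2.5} and \eqref{2.7} already secured at the start of Section~\ref{sec3}. Your alternative observation---that \eqref{4.3} holds trivially under $D\ta=0$, so Theorem~\ref{thm-4.1} applies directly---is a valid shortcut the paper does not state, but it is a minor variant rather than a different argument.
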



\section{The Weyl tensor of the transformation $\n\rightarrow D$}\label{sec5}

Let $W$ and $W'$ be the Weyl tensors for the connections $\n$ and
$D$, respectively, \ie
\begin{equation}\label{6.1}
\begin{split}
    &W=R-\frac{1}{2(n-1)}\left\{\psi_1(\rho)-\frac{\tau}{2n-1}\pi_1\right\},\\[4pt]
    &W'=R'-\frac{1}{2(n-1)}\left\{\psi_1(\rho')-\frac{\tau'}{2n-1}\pi_1\right\}.
\end{split}
\end{equation}

\begin{thm}\label{thm-6.1}
The Weyl tensor is invariant with respect to the transformation
$\n\rightarrow D$.
\end{thm}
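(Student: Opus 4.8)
The plan is to show $W=W'$ by comparing the two expressions in \eqref{6.1} and exploiting the fact that all the correction terms relating the $\nabla$-quantities to the $D$-quantities are themselves built out of $\psi_1$ and $\pi_1$. The key structural observation is that \thmref{thm-3.2} already tells us $R-R'=\frac{1}{2n}\psi_1(S)$, while \cor{cor-3.3} gives $\rho-\rho'$ and $\tau-\tau'$ as expressions in $S$ as well. So the whole computation should collapse into checking an identity among $\psi_1$ and $\pi_1$ alone.

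First I would subtract the two lines of \eqref{6.1} to obtain
\begin{equation*}
W-W'=(R-R')-\frac{1}{2(n-1)}\left\{\psi_1(\rho-\rho')-\frac{\tau-\tau'}{2n-1}\pi_1\right\},
\end{equation*}
using that $\psi_1$ is linear in its tensor argument $S$ (this is clear from \eqref{3.13}, which is linear in $S$). Next I would substitute $R-R'=\frac{1}{2n}\psi_1(S)$ from \eqref{3.15}, and insert the expressions for $\rho-\rho'$ and $\tau-\tau'$ from \eqref{3.16} and \eqref{3.17}. The Ricci correction \eqref{3.16} has the form $\rho-\rho'=-\frac{1}{2n}\{(\tr S)g+2(n-1)S\}$, so by linearity of $\psi_1$ I would split $\psi_1(\rho-\rho')$ into a multiple of $\psi_1(g)$ and a multiple of $\psi_1(S)$. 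The crucial auxiliary fact I would use is that $\psi_1(g)=2\pi_1$, which follows at once from definitions \eqref{3.13} and \eqref{3.12}.

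After these substitutions everything is expressed through $\psi_1(S)$ and $\pi_1$ with scalar coefficients. I expect the $\psi_1(S)$ terms to cancel: the $\frac{1}{2n}\psi_1(S)$ coming from $R-R'$ should be exactly offset by the $S$-part of $-\frac{1}{2(n-1)}\psi_1(\rho-\rho')$, which carries the factor $\frac{2(n-1)}{2n}$ from \eqref{3.16}. Similarly the remaining $\pi_1$ terms—one coming from the $(\tr S)g$ piece via $\psi_1(g)=2\pi_1$ and one from the $\frac{\tau-\tau'}{2n-1}\pi_1$ term with $\tau-\tau'=-\frac{2n-1}{n}\tr S$—should combine to zero. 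The main (though still purely arithmetical) obstacle is keeping the various factors of $2n$, $2(n-1)$ and $2n-1$ straight so that both the $\psi_1(S)$ coefficient and the $\pi_1$ coefficient vanish identically; once both do, $W-W'=0$ and the theorem follows.
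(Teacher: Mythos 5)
Your strategy coincides with the paper's proof: both arguments combine \eqref{3.15}, \eqref{3.16}, \eqref{3.17} with the linearity of $\psi_1$ and the identity $\psi_1(g)=2\pi_1$, and differ only in the order of bookkeeping (the paper first eliminates $\tr S$ between \eqref{3.16} and \eqref{3.17}, solves for $S$, and substitutes into \eqref{3.15}; you subtract the two formulas in \eqref{6.1} and check that the coefficients of $\psi_1(S)$ and $\pi_1$ vanish). This is the same computation, so there is no methodological difference worth noting.

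There is, however, a sign slip that, as written, breaks your predicted cancellation: \eqref{3.15} reads $R=R'-\frac{1}{2n}\psi_1(S)$, i.e. $R-R'=-\frac{1}{2n}\psi_1(S)$, whereas you substitute $R-R'=+\frac{1}{2n}\psi_1(S)$. The $S$-part of the Ricci correction contributes, exactly as you say,
\begin{equation*}
-\frac{1}{2(n-1)}\,\psi_1\!\left(-\frac{n-1}{n}\,S\right)=+\frac{1}{2n}\,\psi_1(S),
\end{equation*}
so with your sign the two $\psi_1(S)$ terms add up to $\frac{1}{n}\psi_1(S)$ instead of cancelling; with the correct sign from \eqref{3.15} they cancel as intended. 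Your $\pi_1$ bookkeeping is fine: the $(\tr S)\,g$ piece of \eqref{3.16} yields $\frac{\tr S}{2n(n-1)}\,\pi_1$ via $\psi_1(g)=2\pi_1$, while the scalar term gives $\frac{\tau-\tau'}{2(n-1)(2n-1)}\,\pi_1=-\frac{\tr S}{2n(n-1)}\,\pi_1$ by \eqref{3.17}, and these indeed cancel. Once the sign in \eqref{3.15} is corrected, your computation closes and reproduces the paper's proof of Theorem~\ref{thm-6.1}.
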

\begin{proof}
We determine $\tr S$ from \eqref{3.17} and replace it in
\eqref{3.16}. So we obtain
\begin{equation*}
    \frac{n-1}{n}S=\left\{\rho'-\frac{\tau'}{2(2n-1)}g\right\}-\left\{\rho-\frac{\tau}{2(2n-1)}g\right\}.
\end{equation*}
After that we determine $S$ from the latter equality and replace
it in \eqref{3.15}. In the result we take into account
\eqref{3.13} and the equality $\psi_1(g)=2\pi_1$ and obtain $W=W'$
by appropriate regrouping.
\end{proof}


\section{Conformal transformation of the curvature tensor of $D$}\label{sec6}

Now we will establish the way of transforming of the curvature
tensor $R'$ of the connection $D$ using the usual conformal
transformation of the metric $g$. This transformation is
determined via the formula
\begin{equation}\label{7.1}
    \bar{g}=e^{2u}g,
\end{equation}
where $u$ is a smooth function on the considered manifold.

It is known that the Levi-Civita connection is transformed by
\eqref{7.1} as follows
\begin{equation}\label{7.2}
    \bar{\n}_xy=\n_xy+\dd u(x)y+\dd u(y)x-g(x,y)L,
\end{equation}
where $L=\grad u$.

In \cite{Sta-Gri}, it is proved that $(M,P,g)\in\W_1$ implies
$(M,P,\bar{g})\in\W_1$, such as the associated Lee 1-forms $\ta$
and $\bar{\ta}$ as well as their corresponding vectors $\Omega$
and $\bar{\Omega}$ are related via the formulas
\begin{gather}
    \bar{\ta}(x)=\ta(x)+2u\dd u(Px),\label{7.3}\\[4pt]
    \bar{\Omega}=e^{-2u}\left(\Omega+2nL\right).\label{7.4}
\end{gather}

Analogously to \eqref{3.6} it is valid the following
\begin{equation*}
    \bar{D}_xy=\bar{\n}_xy+\frac{1}{2n}\left\{\bar{g}(x,y)P\bar{\Omega}-\bar{\ta}(Py)x\right\},
\end{equation*}
The latter equality and equalities \eqref{7.1}, \eqref{7.2},
\eqref{7.3} and \eqref{7.4} yield
\begin{equation}\label{7.5}
    \bar{D}_xy=D_xy+\dd u(x)y.
\end{equation}
Using the definitional equality
$\bar{R}'(x,y)z=\bar{D}_x\bar{D}_yz-\bar{D}_y\bar{D}_xz-\bar{D}_{[x,y]}z$
for the curvature tensor $\bar{R}'$ of $\bar{D}$ and \eqref{7.5},
we obtain $\bar{R}'=R'$, \ie it is valid the following
\begin{thm}\label{thm-7.1}
The curvature tensor of the connection $D$ is invariant with
respect to the usual conformal transformation \eqref{7.1} of the
metric $g$. \qed
\end{thm}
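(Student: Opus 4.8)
The plan is to show that the curvature tensor $R'$ of the connection $D$ coincides with the curvature tensor $\bar{R}'$ of the connection $\bar{D}$ obtained after the conformal change \eqref{7.1}. The key tool will be the transformation formula \eqref{7.5}, namely $\bar{D}_xy=D_xy+\dd u(x)y$, which I regard as already established by the chain of substitutions \eqref{7.1}--\eqref{7.4} displayed above. The strategy is simply to insert this formula into the definitional expression $\bar{R}'(x,y)z=\bar{D}_x\bar{D}_yz-\bar{D}_y\bar{D}_xz-\bar{D}_{[x,y]}z$ and verify that all the terms involving $\dd u$ cancel.

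First I would compute $\bar{D}_x\bar{D}_yz$ by applying \eqref{7.5} twice. Writing $\bar{D}_yz=D_yz+\dd u(y)z$, I apply $\bar{D}_x$ to both summands, using that $\bar{D}_x$ is a derivation so it differentiates the scalar factor $\dd u(y)$ as a directional derivative $x\big(\dd u(y)\big)$. This produces $D_xD_yz$, a term $\dd u(x)D_yz$, a term $\dd u(y)D_xz$, a term $x\big(\dd u(y)\big)z$, and a term $\dd u(x)\dd u(y)z$. Then I would form the antisymmetric combination $\bar{D}_x\bar{D}_yz-\bar{D}_y\bar{D}_xz$, in which the symmetric pieces $\dd u(x)D_yz$ versus $\dd u(y)D_xz$ pair up with their counterparts, and the term $\dd u(x)\dd u(y)z$ cancels against its swap.

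The point I would watch most carefully is the bracket term $\bar{D}_{[x,y]}z=D_{[x,y]}z+\dd u([x,y])z$. The mixed derivative contribution $x\big(\dd u(y)\big)z-y\big(\dd u(x)\big)z$ from the first two terms must combine with $-\dd u([x,y])z$, and this is exactly where the closedness of $\dd u$ enters: since $\dd u$ is exact, $\dd(\dd u)=0$ gives $x\big(\dd u(y)\big)-y\big(\dd u(x)\big)-\dd u([x,y])=0$, so these three contributions vanish together. The remaining terms $\dd u(x)D_yz-\dd u(y)D_xz$ from $\bar{D}_x\bar{D}_yz-\bar{D}_y\bar{D}_xz$ similarly cancel after being collected with the linear-in-$\dd u$ pieces, leaving precisely $D_xD_yz-D_yD_xz-D_{[x,y]}z=R'(x,y)z$. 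This is essentially the familiar computation that a derivation-type shift by a closed $1$-form leaves a curvature unchanged, and no genuine obstacle arises beyond bookkeeping. Hence $\bar{R}'=R'$, which is the assertion of the theorem.
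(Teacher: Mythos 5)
Your proof is correct and takes essentially the same route as the paper: the paper likewise derives the relation $\bar{D}_xy=D_xy+\dd u(x)y$ from \eqref{7.1}--\eqref{7.4} and then concludes $\bar{R}'=R'$ directly from the definitional equality for the curvature of $\bar{D}$. The only difference is one of detail: you write out the cancellation explicitly, in particular the key step $x\bigl(\dd u(y)\bigr)-y\bigl(\dd u(x)\bigr)-\dd u([x,y])=0$ because $\dd u$ is exact, which the paper leaves to the reader.
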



\section{The case of a flat connection $D$}\label{sec7}

Let $D$ be a flat connection, \ie $R'=0$. Then $W'=0$ and because
of \thmref{thm-6.1} we have $W=0$. Therefore it is valid the
following
\begin{prop}\label{prop-8.1}
If $D$ is a flat connection then the manifold is conformally flat
with respect to $\n$. \qed
\end{prop}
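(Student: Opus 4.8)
The plan is to chain together the three facts already established in the excerpt. First I would observe that the hypothesis $R'=0$ for the flat connection $D$ means the curvature tensor of $D$ vanishes identically. Feeding $R'=0$ into the definition of the Weyl tensor $W'$ of $D$ in the second line of \eqref{6.1}, every term on the right-hand side is built from $R'$ and its contractions $\rho'$ and $\tau'$; since $R'=0$ forces $\rho'=0$ and $\tau'=0$ as well, I immediately get $W'=0$.

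Next I would invoke \thmref{thm-6.1}, which asserts that the Weyl tensor is invariant under the transformation $\n\rightarrow D$, \ie $W=W'$. Combining this with the previous step yields $W=0$, so the Weyl tensor of the Levi-Civita connection $\n$ vanishes. The final step is to interpret $W=0$: for a Riemannian manifold of dimension $2n\geq 4$, the vanishing of the Weyl conformal curvature tensor is precisely the classical criterion for local conformal flatness. Hence the manifold is conformally flat with respect to $\n$, which is the assertion of \propref{prop-8.1}.

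I do not anticipate any serious obstacle here, since the argument is a short deduction rather than a computation. The one point requiring mild care is the passage from $W'=0$ to $W=0$: I must be sure that \thmref{thm-6.1} is being applied to the same normalization of the Weyl tensor as defined in \eqref{6.1}, so that the invariance statement $W=W'$ transfers cleanly without stray dimensional factors. The only genuinely external ingredient is the standard fact that $W=0$ characterizes conformal flatness in dimension at least four; this is where the standing assumption $\dim M\geq 4$ is used, and it is the sole step not internal to the paper.
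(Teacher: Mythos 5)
Your proposal is correct and follows exactly the paper's own argument: deduce $W'=0$ from $R'=0$ via \eqref{6.1}, apply \thmref{thm-6.1} to get $W=0$, and conclude conformal flatness from the classical characterization in dimension $\geq 4$. Your added remarks on $\rho'=\tau'=0$ and on the role of $\dim M\geq 4$ merely make explicit what the paper leaves implicit.
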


Further, we have the following
\begin{thm}\label{thm-8.2}
Let the connection $D$ be a flat connection with parallel torsion.
Then the following propositions are valid:
\begin{enumerate}    \renewcommand{\labelenumi}{(\roman{enumi})}
    \item
    $R=-\frac{1}{4n^2}\ta(\Omega)\pi_1$, $\quad\rho=-\frac{2n-1}{4n^2}\ta(\Omega)g$,
    $\quad\tau=-\frac{2n-1}{2n}\ta(\Omega)$;
    \item The tensor $R$ is parallel with respect to $D$;
    \item The manifold is a space form;
    \item The scalar curvature $\tau$ is negative.
\end{enumerate}
\end{thm}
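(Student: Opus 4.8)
The plan is to treat the two hypotheses---flatness $R'=0$ and parallel torsion---by substituting them into the curvature identity \eqref{5.1} already available for a connection with parallel torsion, and then to read off the four assertions as corollaries. For (i), setting $R'=0$ in \eqref{5.1} immediately gives $R=-\frac{1}{4n^2}\ta(\Omega)\pi_1$. The formulas for $\rho$ and $\tau$ then follow by the standard traces: contracting $\pi_1$ with $g^{ij}$ yields $g^{ij}\pi_1(e_i,y,z,e_j)=(2n-1)g(y,z)$ (using $g^{ij}g(e_i,e_j)=2n$ and $g^{ij}g(e_i,z)g(e_j,y)=g(y,z)$), whence $\rho=-\frac{2n-1}{4n^2}\ta(\Omega)g$, and a further contraction gives $\tau=-\frac{2n-1}{2n}\ta(\Omega)$.

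For (ii) the decisive point is that the scalar $\ta(\Omega)$ is $D$-parallel. By \propref{prop-5.1} parallel torsion is equivalent to $D\ta=0$; together with $Dg=0$ and $\ta=g(\Omega,\cdot)$ this forces $D\Omega=0$, so that $X\bigl(\ta(\Omega)\bigr)=2g(D_X\Omega,\Omega)=0$ and $\ta(\Omega)$ is constant. Since $\pi_1$ is assembled from $g$ alone and $Dg=0$, we have $D\pi_1=0$; hence $DR=-\frac{1}{4n^2}D\bigl(\ta(\Omega)\pi_1\bigr)=0$, which is exactly (ii).

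Statements (iii) and (iv) are then short. In (iii), the relation $R=c\,\pi_1$ with the constant $c=-\frac{1}{4n^2}\ta(\Omega)$ is precisely the curvature of a space of constant sectional curvature: evaluating on an orthonormal pair $x,y$ gives $R(x,y,y,x)=c$ independent of the chosen plane, so $(M,P,g)$ is a space form. In (iv), since $g$ is Riemannian, $\ta(\Omega)=\norm{\Omega}\geq 0$, so $\tau=-\frac{2n-1}{2n}\ta(\Omega)\leq 0$; the strict inequality requires $\Omega\neq 0$, which holds because the manifold lies properly in $\W_1$: if $\Omega=0$ then $\ta=0$, and \eqref{2.4} would force $F=0$ and hence $T=0$, i.e. $D=\n$, the degenerate $\W_0$ case.

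The main obstacle I anticipate is the bookkeeping in (ii)---showing that $\ta(\Omega)$ is a genuine constant rather than merely pointwise nonnegative---since both the parallelism of $R$ and the strict sign in (iv) (and thus the identification of the space form as one of negative curvature) rest entirely on $\ta(\Omega)$ being a \emph{positive constant}.
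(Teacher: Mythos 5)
Your proof is correct, and for parts (i), (iii) and (iv) it runs essentially along the paper's own lines: substituting $R'=0$ into \eqref{5.1} (available by \propref{prop-5.3} under parallel torsion) gives the formula for $R$, and the traces $g^{ij}\pi_1(e_i,y,z,e_j)=(2n-1)g(y,z)$ and $\tau=-\frac{2n-1}{2n}\ta(\Omega)$ are exactly the paper's computation. The genuine divergence is in (ii). The paper argues structurally: it writes the relation \eqref{8.2} between $R(x,y)z$ and $R'(x,y)z$ in terms of $D$-covariant derivatives of $Q$, observes that $DT=0$ together with the Hayden formula \eqref{3.4'} forces $DQ=0$, hence $DR=DR'$, and concludes $DR=0$ from flatness. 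You instead prove that the scalar $\ta(\Omega)$ is constant --- $D\ta=0$ by \propref{prop-5.1}, which with $Dg=0$ gives $D\Omega=0$ and so $X\bigl(\ta(\Omega)\bigr)=2g(D_X\Omega,\Omega)=0$ --- and then differentiate the explicit formula $R=-\frac{1}{4n^2}\ta(\Omega)\pi_1$, using $D\pi_1=0$. Both arguments are valid; yours is more elementary (it never needs \eqref{8.2} or \eqref{3.4'}) but is tied to the explicit shape of $R$ from (i), whereas the paper's route establishes the more general fact that parallel torsion alone yields $DR=DR'$, independently of flatness. Your constancy of $\ta(\Omega)$ also buys two things the paper leaves implicit: it makes (iii) immediate without any appeal to Schur's lemma (the paper's ``directly'' tacitly assumes the coefficient is constant), and it isolates the one loose end in (iv) --- the paper simply asserts $\ta(\Omega)=g(\Omega,\Omega)>0$, which fails when $\Omega=0$ (then $R=0$ and $\tau=0$); you are right that strict negativity requires excluding the degenerate case $\ta=0$, i.e. $F=0$, $T=0$ and $D=\n$, which is the $\W_0$ situation the paper excludes only later, in its example.
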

\begin{proof}
Let the connection $D$ be a flat connection with parallel torsion.
Then, according to the \propref{prop-5.3}, we obtain the first
equality in (i), which implies the other two equalities of (i).

Bearing in mind \eqref{3.3}, we have
\begin{equation}\label{8.2}
\begin{split}
&R(x,y)z=R'(x,y)z -\left(D_x Q\right)(y,z)+\left(D_y Q\right)(x,z)
\\[4pt]
&\phantom{R(x,y)z=R'(x,y)z}
+Q\bigl(x,Q(y,z)\bigr)-Q\bigl(y,Q(x,z)\bigr).
\end{split}
\end{equation}
Since $DT=0$, using \eqref{3.4'}, we obtain $DQ=0$. Then relation
\eqref{8.2} implies $DR=DR'$. Hence, because of $DR'=0$, we obtain
$DR=0$, \ie (ii) is valid.

The formulas in (i) imply directly (iii).

Since $g$ is a Riemannian metric, then
$\ta(\Omega)=g(\Omega,\Omega)> 0$. Therefore, because of the
latter equality in (i), we have that $\tau<0$, \ie (iv) is valid.
\end{proof}


\section{Example}\label{sec8}

\subsection{A Lie group $G$ as a Riemannian product $\W_1$-manifold $(G,P,g)$}

Let $G$ be a 4-dimensional real connected Lie group and $\g$ be
its Lie algebra with a basis $\{X_i\}$.

We introduce a structure $P$ and left invariant metric $g$ as
follows
\begin{equation}\label{9.1}
    PX_1=X_3,\quad PX_2=X_4,\quad PX_3=X_{1},\quad PX_4=X_{2},
\end{equation}
\begin{equation}\label{9.2}
    g(X_i,X_j)=
\begin{cases}
\begin{array}{rl}
1, \quad & i=j;\\
0, \quad & i\neq j.
\end{array}
\end{cases}
\end{equation}
Thus, $(G,P,g)$ becomes a Riemannian almost product manifold with
$\tr{P}=0$.

We will consider the case when $P$ is an Abelian almost product
structure \cite{AnBaDoOv}, \ie
\begin{equation}\label{9.3}
    [PX_i,PX_j]=-[X_i,X_j].
\end{equation}
Then the manifold $(G,P,g)$ has a zero Nijenhuis tensor and
therefore $(G,P,g)$ belongs to the class $\W_1\oplus\W_2$.

\begin{thm}\label{thm-9.1}
The manifold $(G,P,g)$ is a Riemannian product manifold belonging
to the class $\W_1$ if and only if the Lie algebra $\g$ is
determined by the conditions:
\begin{equation}\label{9.5}
\begin{array}{l}
[X_1,X_2]=-[X_3,X_4]=\lm_1 X_1+\lm_2 X_2+\lm_3 X_3+\lm_{4} X_4,\\[4pt]
[X_1,X_3]=[X_2,X_4]=\lm_4 X_1-\lm_3 X_2+\lm_2 X_3-\lm_1 X_4,\\[4pt]
[X_2,X_3]=[X_1,X_4]=0,\qquad (\lm_i\in \R;\; i=1,2,3,4).
\end{array}
\end{equation}
\end{thm}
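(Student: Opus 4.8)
The plan is to reduce the claim to the defining identity \eqref{2.4} of the class $\W_1$ and then to turn that identity into a homogeneous linear system in the structure constants of $\g$. Since the Abelian hypothesis \eqref{9.3} has already placed $(G,P,g)$ in $\W_1\oplus\W_2$, the manifold is automatically a Riemannian product manifold, so the whole content is to decide when the fundamental tensor $F$ of \eqref{2.2} has the special form \eqref{2.4}, with $\ta$ given by \eqref{2.4'}. First I would record the bracket relations forced by \eqref{9.3}: combined with antisymmetry these give $[X_3,X_4]=-[X_1,X_2]$ and $[X_2,X_3]=[X_1,X_4]$, leaving $[X_1,X_2]$, $[X_1,X_3]$, $[X_2,X_4]$ and $[X_1,X_4]$ as the independent brackets, hence $16$ a priori free structure constants.

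The next step is to compute the Levi-Civita connection of $g$. As $g$ is left invariant and $\{X_i\}$ is orthonormal by \eqref{9.2}, the Koszul formula reduces to
\begin{equation*}
2g(\n_{X_i}X_j,X_k)=g([X_i,X_j],X_k)-g([X_j,X_k],X_i)+g([X_k,X_i],X_j),
\end{equation*}
so each $\n_{X_i}X_j$ is an explicit combination of the $X_k$ with coefficients linear in the structure constants. Substituting into $(\n_{X_i}P)X_j=\n_{X_i}(PX_j)-P(\n_{X_i}X_j)$ and using \eqref{9.1}, I obtain every component $F(X_i,X_j,X_k)=g\big((\n_{X_i}P)X_j,X_k\big)$ as a linear expression in those constants. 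Since the inverse metric is the identity in this basis, \eqref{2.4'} then gives the Lee form componentwise as $\ta(X_k)=\sum_i F(X_i,X_i,X_k)$, again linear in the structure constants.

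Then I would impose \eqref{2.4} (with $n=2$): forming the difference of the computed $F(X_i,X_j,X_k)$ and the right-hand side of \eqref{2.4} over all admissible index triples produces a finite homogeneous linear system. Solving it is the heart of the matter, and I expect it to force $[X_1,X_4]=[X_2,X_3]=0$, to identify $[X_1,X_3]=[X_2,X_4]$, and to couple the components of $[X_1,X_3]$ to those of $[X_1,X_2]=\lm_1X_1+\lm_2X_2+\lm_3X_3+\lm_4X_4$ precisely in the pattern displayed in \eqref{9.5}, with $\lm_1,\dots,\lm_4$ free; this is the forward implication. For the converse I would substitute \eqref{9.5} back into the formulas for $F$ and $\ta$ and verify \eqref{2.4} directly, and in addition check that \eqref{9.5} satisfies the Jacobi identity so that it genuinely defines a $4$-parameter family of Lie algebras.

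The main obstacle is the bookkeeping in the middle step: with $16$ surviving structure constants and the many index triples entering \eqref{2.4}, the linear system is large. The practical remedy is to exploit the symmetries of $F$ listed after \eqref{2.2}, namely $F(x,y,z)=F(x,z,y)=-F(x,Py,Pz)$ and $F(x,y,Pz)=-F(x,Py,z)$, to reduce to a small set of independent components before comparing with \eqref{2.4}; this collapses the system to the handful of equations that yield \eqref{9.5}. A secondary, easily overlooked point is the Jacobi check for the converse: one must confirm that the tight coupling between the $[X_1,X_2]$ and $[X_1,X_3]$ coefficients in \eqref{9.5} is automatically consistent with the bracket relations rather than an extra obstruction, so that the family is nonempty and genuinely $4$-dimensional.
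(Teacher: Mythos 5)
Your proposal is correct and takes essentially the same route as the paper: the reduced Koszul formula for the left-invariant orthonormal frame, an explicit expression for $F$ in terms of the brackets and the Abelian condition \eqref{9.3}, and a componentwise comparison with the characteristic $\W_1$-identity \eqref{2.4}, invoking the symmetries of $F$ and the Jacobi identity, to extract \eqref{9.5}. Your extra Jacobi check for the converse is sound and indeed unproblematic, since the Jacobi identity holds identically for the brackets \eqref{9.5} for all $\lm_i$.
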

\begin{proof}
Because of \eqref{9.2} we have
\begin{equation}\label{9.3'}
\begin{split}
    2g\bigl(\n_{X_i} X_j,X_k\bigr)
    =
    g\left([X_i,X_j],X_k\right)&+g\left([X_k,X_i],X_j\right)\\[4pt]
    &+g\left([X_k,X_j],X_i\right),
\end{split}
\end{equation}
which implies the following equality, according to \eqref{9.3} and
\eqref{2.2}:
\begin{equation}\label{9.4}
\begin{split}
    2F\left(X_i,X_j,X_k\right)&=g\bigl([X_i,PX_j]-P[X_i,X_j],X_k\bigr)\\[4pt]
    &+g\bigl([X_i,PX_k]-P[X_i,X_k],X_j\bigr)\\[4pt]
    &+2g\bigl([X_k,PX_j],X_i\bigr).
\end{split}
\end{equation}
The comparing of condition \eqref{9.4} with the characteristic
condition \eqref{2.4} for the class $\W_1$, taking into account
\eqref{9.1}, \eqref{9.2}, \eqref{9.3} and the Jacobi identity for
the commutators $[X_i,X_j]$, yields conditions \eqref{9.5}.
\end{proof}

The comparing of \eqref{9.4} and \eqref{2.4} give also the
following formulas for the associated Lee 1-form $\ta$:
\begin{equation}\label{9.6}
\ta_1=4\lm_4,\quad \ta_2=-4\lm_3,\quad \ta_3=-4\lm_2,\quad
\ta_4=4\lm_1.
\end{equation}

Further, $(G,P,g)$ will stand for the manifold determined by
\eqref{9.5}.

\subsection{Some geometrical characteristics of the manifold $(G,P,g)$}

By virtue of \eqref{9.3'} and \eqref{9.5} we get the non-zero
components $\n_{X_i}X_j$ of the Levi-Civita connection $\n$:
\begin{equation}\label{9.7}
    \begin{array}{l}
\n_{X_1} X_1=\n_{X_4} X_4=-\lm_1 X_2-\lm_4 X_3,\\[4pt]%
\n_{X_2} X_2=\n_{X_3} X_3=\lm_2 X_1+\lm_3 X_4,\\[4pt]%
\n_{X_1} X_2=-\n_{X_3} X_4=\lm_1 X_1+\lm_3 X_3,\\[4pt]%
\n_{X_2} X_1=-\n_{X_4} X_3=-\lm_2 X_2-\lm_4 X_4,\\[4pt]%
\n_{X_1} X_3=\n_{X_2} X_4=\lm_4 X_1-\lm_3 X_2,\\[4pt]%
\n_{X_3} X_1=\n_{X_4} X_2=-\lm_2 X_3+\lm_1 X_4.
    \end{array}
\end{equation}

Using \eqref{9.7} and \eqref{9.5} we obtain the non-zero
components $R_{ijks}=R(X_i,\allowbreak X_j,X_k,\allowbreak{}X_s)$
of the curvature tensor $R$ for $\n$:
\begin{equation}\label{9.8}
\begin{array}{c}
\begin{array}{ll}
    R_{1212}=\lm_1^2+\lm_2^2,\quad & R_{1313}=\lm_2^2+\lm_4^2,\\[4pt]
    R_{1414}=\lm_1^2+\lm_4^2,\quad & R_{2323}=\lm_2^2+\lm_3^2,\\[4pt]
    R_{2424}=\lm_1^2+\lm_3^2,\quad & R_{3434}=\lm_3^2+\lm_4^2,\\[4pt]
\end{array}\\[4pt]
\begin{array}{ll}
    R_{1213}=R_{2134}=\lm_1\lm_4,\quad & R_{1214}=R_{2343}=\lm_2\lm_4,\\[4pt]
    R_{1232}=R_{1434}=\lm_1\lm_3,\quad & R_{1242}=R_{1343}=\lm_2\lm_3,\\[4pt]
    R_{1341}=R_{2342}=\lm_1\lm_2,\quad & R_{1332}=R_{1442}=\lm_3\lm_4.
\end{array}
\end{array}
\end{equation}
The rest of the non-zero components are obtained by the properties
\[
R_{ijks}=R_{ksij},\qquad R_{ijks}=-R_{jiks}=-R_{ijsk}.
\]

Using \eqref{9.8} for the non-zero components
$\rho_{ij}=\rho(X_i,X_j)$ of the Ricci tensor $\rho$ we compute:
\begin{equation}\label{9.9}
\begin{array}{c}
\begin{array}{ll}
    \rho_{11}=-2\left(\lm_1^2+\lm_2^2+\lm_4^2\right),\quad
    &
    \rho_{22}=-2\left(\lm_1^2+\lm_2^2+\lm_3^2\right),\\[4pt]
    \rho_{33}=-2\left(\lm_2^2+\lm_3^2+\lm_4^2\right),\quad
    &
    \rho_{44}=-2\left(\lm_1^2+\lm_3^2+\lm_4^2\right),\\[4pt]
\end{array}\\[4pt]
\begin{array}{lll}
    \rho_{12}=2\lm_3\lm_4,\quad & \rho_{13}=-2\lm_1\lm_3,\quad & \rho_{14}=-2\lm_2\lm_3,\\[4pt]
    \rho_{34}=2\lm_1\lm_2,\quad & \rho_{23}=-2\lm_1\lm_4,\quad & \rho_{24}=-2\lm_2\lm_4.
\end{array}
\end{array}
\end{equation}
The rest of the non-zero components are obtained by the property
$\rho_{ij}=\rho_{ji}$.

By \eqref{9.9} we obtain the following
\begin{prop}\label{prop-8.1'}
The manifold $(G,P,g)$ has a negative scalar curvature for $\n$:
\begin{equation}\label{9.10}
    \tau=-6\left(\lm_1^2+\lm_2^2+\lm_3^2+\lm_4^2\right).
\end{equation}
\qed
\end{prop}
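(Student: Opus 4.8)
The plan is to compute the scalar curvature directly from its definition $\tau=g^{ij}\rho(e_i,e_j)$ by contracting the Ricci tensor whose components are listed in \eqref{9.9}. The decisive simplification is that, by \eqref{9.2}, the basis $\{X_i\}$ is $g$-orthonormal, so the inverse matrix $g^{ij}$ coincides with the identity, $g^{ij}=\delta^{ij}$. Consequently the contraction reduces to the plain sum of the diagonal entries,
\begin{equation*}
\tau=\sum_{i=1}^{4}\rho(X_i,X_i)=\rho_{11}+\rho_{22}+\rho_{33}+\rho_{44}.
\end{equation*}

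Next I would substitute the four diagonal values from \eqref{9.9} and collect terms. A glance at those expressions shows that each square $\lm_i^2$ occurs in exactly three of the four diagonal components---every $\rho_{jj}$ omits precisely one of the four squares---and always with coefficient $-2$. Summing thus attaches the uniform coefficient $-2\cdot 3=-6$ to all four squares, which is exactly \eqref{9.10}. Since $\tau$ then equals $-6$ times a sum of real squares, it is manifestly non-positive, yielding the asserted negativity.

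I do not expect any genuine obstacle: the whole argument is a single contraction followed by elementary bookkeeping. The only point deserving care is the multiplicity count---verifying that each $\lm_i^2$ indeed appears in precisely three of the diagonal Ricci components of \eqref{9.9}---but this is immediate from the symmetric pattern of those formulas.
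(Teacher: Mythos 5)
Your proof is correct and is exactly the paper's (implicit) argument: the proposition follows by contracting the Ricci components \eqref{9.9} over the $g$-orthonormal basis \eqref{9.2}, with each $\lm_i^2$ appearing in three diagonal entries with coefficient $-2$. The only caveat, shared with the paper's own statement, is that $\tau$ is strictly negative only when not all $\lm_i$ vanish; the paper excludes the trivial case $\lm_1=\lm_2=\lm_3=\lm_4=0$ explicitly only later, in the subsection on the connection $D$.
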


For the Riemannian sectional curvatures of the $P$-invariant basis
2-planes \allowbreak $(X_1,X_3)$ and $(X_2,X_4)$, i.e. for the
invariant sectional curvatures of the basis 2-planes, we get
\begin{equation}\label{9.11}
    k_{13}=-\left(\lm_2^2+\lm_4^2\right),\qquad
    k_{24}=-\left(\lm_1^2+\lm_3^2\right).
\end{equation}
The sectional curvatures of the rest of the basis 2-planes, i.e.
the anti-invariant sectional curvatures of the basis 2-planes,
are:
\begin{equation}\label{9.12}
    \begin{array}{ll}
    k_{12}=-\left(\lm_1^2+\lm_2^2\right),\qquad &
    k_{14}=-\left(\lm_1^2+\lm_4^2\right),\\[4pt]
    k_{23}=-\left(\lm_2^2+\lm_3^2\right),\qquad &
    k_{34}=-\left(\lm_3^2+\lm_4^2\right).
\end{array}
\end{equation}

Conditions \eqref{9.11} and \eqref{9.12} imply the following
\begin{thm}\label{thm-9.2}
The manifold $(G,P,g)$ has:
\begin{enumerate}\renewcommand{\labelenumi}{(\roman{enumi})}
    \item
    a constant invariant sectional curvature if and only if
\[
\lm_1^2-\lm_2^2+\lm_3^2-\lm_4^2=0;
\]
    \item
    a constant anti-invariant sectional curvature if and only if
\[
\lm_1^2=\lm_3^2,\qquad \lm_2^2=\lm_4^2;
\]
    \item
    a constant sectional curvature if and only if
\[
\lm_1^2=\lm_2^2=\lm_3^2=\lm_4^2.
\]
In this case
$R=\frac{\tau}{12}\pi_1$. \qed
\end{enumerate}
\end{thm}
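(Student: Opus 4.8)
The plan is to read off all three equivalences directly from the explicit sectional-curvature formulas \eqref{9.11} and \eqref{9.12}, since each clause only compares sectional curvatures of the six basis $2$-planes. For (i), a constant invariant sectional curvature means the two $P$-invariant values agree, $k_{13}=k_{24}$; equating the expressions in \eqref{9.11} gives $\lm_2^2+\lm_4^2=\lm_1^2+\lm_3^2$, which is exactly $\lm_1^2-\lm_2^2+\lm_3^2-\lm_4^2=0$. This is a one-line algebraic rearrangement with a trivial converse.

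For (ii), a constant anti-invariant sectional curvature means the four values $k_{12},k_{14},k_{23},k_{34}$ in \eqref{9.12} coincide. First I would impose $k_{12}=k_{14}$ and $k_{12}=k_{23}$, obtaining $\lm_2^2=\lm_4^2$ and $\lm_1^2=\lm_3^2$; the remaining equality $k_{12}=k_{34}$ is then automatic, since $k_{34}=-(\lm_3^2+\lm_4^2)=-(\lm_1^2+\lm_2^2)=k_{12}$ under these two relations. Conversely, $\lm_1^2=\lm_3^2$ and $\lm_2^2=\lm_4^2$ make all four expressions equal. For (iii), constant sectional curvature means all six basis curvatures coincide, i.e. (i) and (ii) hold simultaneously; substituting $\lm_1^2=\lm_3^2,\ \lm_2^2=\lm_4^2$ into $\lm_1^2-\lm_2^2+\lm_3^2-\lm_4^2=0$ yields $2(\lm_1^2-\lm_2^2)=0$, hence $\lm_1^2=\lm_2^2=\lm_3^2=\lm_4^2$, and the converse is immediate from \eqref{9.11}--\eqref{9.12}.

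It remains to address the identity $R=\frac{\tau}{12}\pi_1$ in the constant case. Writing $a$ for the common value $\lm_1^2=\cdots=\lm_4^2$ and using $\tau=-24a$ from \eqref{9.10}, I would verify the identity component-by-component against \eqref{9.8} and the definition \eqref{3.12} of $\pi_1$. The pure sectional components match at once: $R_{ijij}=2a=-\frac{\tau}{12}=\frac{\tau}{12}\pi_1(X_i,X_j,X_i,X_j)$ on each basis plane. The main obstacle is the mixed components: in \eqref{9.8} entries such as $R_{1213}=\lm_1\lm_4$ and $R_{1214}=\lm_2\lm_4$ involve products $\lm_i\lm_j$, whereas every corresponding value of $\pi_1$ vanishes. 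Thus the whole force of the last claim lies in reconciling these off-diagonal products with the purely ``diagonal'' tensor $\pi_1$, and this---rather than the diagonal terms or the three equivalences---is the point where the computation must be examined with care; I would carry it out by evaluating $\pi_1$ on each index quadruple occurring in \eqref{9.8} and matching term by term.
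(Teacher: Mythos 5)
Your treatment of the three equivalences (i)--(iii) is correct and is in fact exactly the paper's argument: the paper's entire ``proof'' is the one-line remark that conditions \eqref{9.11} and \eqref{9.12} imply the theorem, i.e.\ it too reads everything off the six basis sectional curvatures, and your algebra (including the observation that $k_{12}=k_{34}$ is automatic once $\lm_1^2=\lm_3^2$, $\lm_2^2=\lm_4^2$) fills in what the paper leaves implicit.

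The genuine gap is the final identity $R=\frac{\tau}{12}\pi_1$, which you correctly isolate as the delicate point but then leave as an unexecuted ``term by term'' check. If you carry that check out, it fails: under $\lm_1^2=\lm_2^2=\lm_3^2=\lm_4^2=a$ with $a\neq 0$, the mixed components in \eqref{9.8}, e.g.\ $R_{1213}=\lm_1\lm_4=\pm a$ and $R_{1341}=\lm_1\lm_2=\pm a$, are nonzero, while by \eqref{3.12} and \eqref{9.2} the tensor $\pi_1$ vanishes on every index quadruple containing three distinct indices, so $\pi_1(X_1,X_2,X_1,X_3)=\pi_1(X_1,X_3,X_4,X_1)=0$. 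Since all six products $\lm_i\lm_j$ ($i\neq j$) occur among the mixed components, they vanish simultaneously only if at least three of the $\lm_i$ are zero, which together with equality of the squares forces $\lm_1=\lm_2=\lm_3=\lm_4=0$ --- the trivial Riemannian $P$-manifold case that the paper later excludes. Concretely, for $\lm_1=\cdots=\lm_4=1$ the plane spanned by $X_1$ and $(X_3+X_4)/\sqrt{2}$ has sectional curvature $-1$, while $k_{13}=-2$, so equality of the six \emph{basis} curvatures does not give constancy over all $2$-planes, and $R\neq\frac{\tau}{12}\pi_1$. Thus your proposal cannot be completed as announced; to salvage the last clause one must either add the vanishing of the products $\lm_i\lm_j$ (the mixed part of \eqref{9.8}) to the condition in (iii), or read ``constant sectional curvature'' as referring only to the basis $2$-planes --- in which case the identity $R=\frac{\tau}{12}\pi_1$ simply does not follow. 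Note that this is not a defect of your route as opposed to the paper's: the paper's proof never confronts the mixed components at all, so your scrutiny exposes a point the paper itself glosses over.
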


By virtue of \eqref{6.1}, using \eqref{9.8}, \eqref{9.9},
\eqref{9.10}, \eqref{3.12} and \eqref{3.13} for $S=\rho$, we
obtain the following
\begin{prop}\label{prop-8.2'}
The manifold $(G,P,g)$ has a zero Weyl tensor, \ie $(G,P,g)$ is
conformally flat for $\n$. \qed
\end{prop}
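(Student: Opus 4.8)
The plan is to prove $W=0$ by a direct component computation, since the curvature tensor, the Ricci tensor, and the scalar curvature of $(G,P,g)$ are all available explicitly. First I would specialise the defining formula \eqref{6.1} for the Weyl tensor of $\n$ to the present dimension: here $\dim M=4$, so $n=2$, and \eqref{6.1} becomes
\begin{equation*}
    W=R-\frac{1}{2}\left\{\psi_1(\rho)-\frac{\tau}{3}\pi_1\right\}.
\end{equation*}

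Next, since the basis $\{X_i\}$ is $g$-orthonormal by \eqref{9.2}, the tensors $\psi_1(\rho)$ and $\pi_1$ have particularly simple components. From \eqref{3.13} (with $S=\rho$) and \eqref{3.12} one gets
\begin{equation*}
    \psi_1(\rho)_{ijks}=\delta_{jk}\rho_{is}-\delta_{ik}\rho_{js}+\rho_{jk}\delta_{is}-\rho_{ik}\delta_{js},\qquad
    (\pi_1)_{ijks}=\delta_{jk}\delta_{is}-\delta_{ik}\delta_{js},
\end{equation*}
so that each $W_{ijks}$ becomes a combination of entries of \eqref{9.8}, \eqref{9.9} and the number \eqref{9.10}.

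I would then verify $W_{ijks}=0$ organised by the index pattern, using the symmetries $W_{ijks}=W_{ksij}=-W_{jiks}=-W_{ijsk}$ to reduce to finitely many independent cases. For the ``diagonal'' components $R_{abab}$ appearing in the first block of \eqref{9.8}, one has $\psi_1(\rho)_{abab}=-(\rho_{aa}+\rho_{bb})$ and $(\pi_1)_{abab}=-1$; substituting \eqref{9.9} and \eqref{9.10} shows that $\frac{1}{2}\{\psi_1(\rho)_{abab}-\frac{\tau}{3}(\pi_1)_{abab}\}$ collapses exactly to $R_{abab}$. For the ``mixed'' components involving three distinct indices, such as $R_{1213}$, the $\pi_1$ term vanishes and $\psi_1(\rho)$ retains a single Ricci entry (for $W_{1213}$ this is $-\rho_{23}=2\lm_1\lm_4$), which again reproduces $2R_{ijks}$ after using \eqref{9.9}. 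Finally, for every component with four distinct indices, or with an index pattern absent from \eqref{9.8}, one has $R_{ijks}=0$, and both $\psi_1(\rho)_{ijks}$ and $(\pi_1)_{ijks}$ vanish because each Kronecker delta then carries distinct indices; hence these components of $W$ are zero as well.

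The computation is entirely mechanical, so the hard part is not any single step but the bookkeeping: one must run through a complete list of independent components, keeping the curvature symmetries in mind, to be sure no case is missed. Once every independent $W_{ijks}$ is checked to vanish, we conclude $W=0$, \ie $(G,P,g)$ is conformally flat for $\n$.
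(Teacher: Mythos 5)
Your route is exactly the paper's: the paper justifies this proposition by the same direct verification, substituting \eqref{9.8}, \eqref{9.9}, \eqref{9.10} into \eqref{6.1} with $\psi_1$ and $\pi_1$ taken from \eqref{3.13} and \eqref{3.12} for $S=\rho$ (the paper gives no further detail, so your case analysis is precisely the computation it leaves implicit). Your specialisation $n=2$ of \eqref{6.1}, your component formulas for $\psi_1(\rho)$ and $\pi_1$ in the $g$-orthonormal basis \eqref{9.2}, and your diagonal and three-distinct-index checks are all correct; for instance $\psi_1(\rho)_{1212}=-(\rho_{11}+\rho_{22})$ and $(\pi_1)_{1212}=-1$ give $W_{1212}=(\lm_1^2+\lm_2^2)-\frac{1}{2}\bigl\{2(2\lm_1^2+2\lm_2^2+\lm_3^2+\lm_4^2)-2(\lm_1^2+\lm_2^2+\lm_3^2+\lm_4^2)\bigr\}=0$, and $W_{1213}=\lm_1\lm_4-\frac{1}{2}(-\rho_{23})=0$ as you say.

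There is one step where your bookkeeping, run literally against the paper's data, would break: you assert that every component of $R$ with four distinct indices vanishes, yet the printed table \eqref{9.8} contains $R_{2134}=\lm_1\lm_4$, whose indices $2,1,3,4$ are all distinct. Taken at face value this would give $W_{2134}=R_{2134}=\lm_1\lm_4\neq 0$ (since, as you correctly note, $\psi_1(\rho)$ and $\pi_1$ vanish on four distinct orthonormal indices), contradicting the proposition. The resolution is that this entry of \eqref{9.8} is a misprint: computing directly from \eqref{9.7} and \eqref{9.5} one finds
\begin{equation*}
R(X_2,X_1)X_3=\lm_1\lm_4\,X_1-\lm_1\lm_3\,X_2,
\end{equation*}
so in fact $R_{2134}=0$, while the component genuinely carrying the value $\lm_1\lm_4$ alongside $R_{1213}$ is $R_{2434}$ (a direct computation gives $R(X_2,X_4)X_3=\lm_1\lm_2\,X_2+\lm_1\lm_4\,X_4$), which has only three distinct indices and fits your mixed-index pattern $\psi_1(\rho)_{abac}=-\rho_{bc}$. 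So your blanket claim is true of the actual curvature tensor and your proof is sound in substance, but to close the argument you must either recompute the four-distinct-index components $R_{1234}$, $R_{1324}$, $R_{1423}$ from \eqref{9.7} (they all vanish) or explicitly note and correct the typo in \eqref{9.8}; otherwise your case analysis contradicts the very table it claims to read the data from.
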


\subsection{The connection $D$ on the manifold $(G,P,g)$}

Further in our considerations we exclude the trivial case
$\lm_1=\lm_2=\lm_3=\lm_4=0$, \ie the case when $(G,P,g)$ is a
Riemannian $P$-manifold.

By virtue of \eqref{3.6}, \eqref{9.7} and \eqref{9.2} for the
components of the connection $D$ on  $(G,P,g)$ we obtain:
\begin{equation}\label{9.13}
    \begin{array}{ll}
D_{X_1} X_1=-\lm_3 X_4,\quad & D_{X_1} X_2=\lm_3 X_3,\\[4pt]%
D_{X_1} X_3=-\lm_3 X_2,\quad & D_{X_1} X_4=\lm_3 X_1,\\[4pt]%
D_{X_2} X_1=-\lm_4 X_4,\quad & D_{X_2} X_2=\lm_4 X_3,\\[4pt]%
D_{X_2} X_3=-\lm_4 X_2,\quad & D_{X_2} X_4=\lm_4 X_1,\\[4pt]%
D_{X_3} X_2=-\lm_1 X_3,\quad & D_{X_3} X_1=\lm_1 X_4,\\[4pt]%
D_{X_3} X_4=-\lm_1 X_1,\quad & D_{X_3} X_3=\lm_1 X_2,\\[4pt]%
D_{X_4} X_2=-\lm_2 X_3,\quad & D_{X_4} X_1=\lm_2 X_4,\\[4pt]%
D_{X_4} X_4=-\lm_2 X_1,\quad & D_{X_4} X_3=\lm_2 X_2.
    \end{array}
\end{equation}

Using \eqref{9.13} and \eqref{9.5}, we obtain that all components
of the curvature tensor $R'$ of $D$ are zeros, \ie the following
proposition holds:
\begin{prop}\label{prop-8.2''}
The manifold $(G,P,g)$ has a flat connection $D$. \qed
\end{prop}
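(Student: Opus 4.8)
The plan is to compute the components of $R'$ straight from the definition $R'(x,y)z=D_xD_yz-D_yD_xz-D_{[x,y]}z$, feeding in the explicit connection \eqref{9.13} and the brackets \eqref{9.5}. Rather than expanding each $R'(X_i,X_j,X_k,X_s)$ separately, I would first isolate the structural feature of \eqref{9.13} that trivializes everything, since this both shortens the work and explains why flatness occurs.

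Reading off \eqref{9.13}, every operator $D_{X_i}$ is one and the same map up to a scalar. Concretely, let $A\colon\g\to\g$ be the linear map with $AX_1=-X_4$, $AX_2=X_3$, $AX_3=-X_2$, $AX_4=X_1$, and let $\phi$ be the left-invariant $1$-form with $\phi(X_1)=\lm_3$, $\phi(X_2)=\lm_4$, $\phi(X_3)=-\lm_1$, $\phi(X_4)=-\lm_2$. Then \eqref{9.13} is precisely the assertion $D_xy=\phi(x)\,Ay$ on the left-invariant frame, and a one-line check gives $A^2=-\id$. Using that $\phi(X_j)$ is a constant function on $G$ (so no Leibniz term appears), one computes $D_{X_i}D_{X_j}X_k=\phi(X_i)\,A\bigl(\phi(X_j)AX_k\bigr)=\phi(X_i)\phi(X_j)A^2X_k=-\phi(X_i)\phi(X_j)X_k$, which is symmetric under $i\leftrightarrow j$. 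Hence the two differentiation terms cancel and $R'(X_i,X_j)X_k=-D_{[X_i,X_j]}X_k=-\phi([X_i,X_j])\,AX_k$, so the whole problem collapses to verifying $\phi([X_i,X_j])=0$ for every pair.

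This last vanishing is the only genuine content of the argument, and it is exactly where the defining relations \eqref{9.5} of the $\W_1$-structure enter. The brackets $[X_2,X_3]=[X_1,X_4]=0$ are trivial; for the rest it suffices, by \eqref{9.5}, to treat $[X_1,X_2]$ and $[X_1,X_3]$, since $[X_3,X_4]=-[X_1,X_2]$ and $[X_2,X_4]=[X_1,X_3]$. Evaluating $\phi$ on $[X_1,X_2]=\lm_1X_1+\lm_2X_2+\lm_3X_3+\lm_4X_4$ gives $\lm_3\lm_1+\lm_4\lm_2-\lm_1\lm_3-\lm_2\lm_4=0$, and on $[X_1,X_3]=\lm_4X_1-\lm_3X_2+\lm_2X_3-\lm_1X_4$ gives $\lm_3\lm_4-\lm_4\lm_3-\lm_1\lm_2+\lm_2\lm_1=0$. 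The pairwise cancellations are forced precisely by the symmetric form of the structure constants in \eqref{9.5}: without those special relations $\phi$ would not annihilate the commutators.

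Therefore $\phi([X_i,X_j])=0$ for all $i,j$, every component of $R'$ vanishes, and $D$ is flat. I do not expect a conceptual obstacle here; the content sits entirely in the bracket-vanishing above, and the only practical hazard is sign bookkeeping in $A$ and $\phi$. Once the reading $D_x=\phi(x)A$ of \eqref{9.13} is checked against the table, the cancellation $A^2=-\id$ and the computation of $\phi$ on \eqref{9.5} finish the proof immediately.
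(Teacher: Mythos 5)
Your proof is correct, and it takes a genuinely different route from the paper's. The paper disposes of this proposition by brute force: it simply states that substituting the table \eqref{9.13} and the brackets \eqref{9.5} into the definition of $R'$ makes all components $R'(X_i,X_j,X_k,X_s)$ vanish, with no structural explanation. You instead factor the connection as $D_xy=\phi(x)Ay$ with $A^2=-\id$ and constant $\phi(X_i)$, which kills the two second-derivative terms by symmetry in $i\leftrightarrow j$ and reduces flatness to the single linear condition $\phi([\g,\g])=0$; I checked the table against your $A$ and $\phi$ (including the signs $\phi(X_3)=-\lm_1$, $\phi(X_4)=-\lm_2$), the identity $A^2=-\id$, the tensoriality step $D_{[X_i,X_j]}X_k=\phi([X_i,X_j])AX_k$, and the two evaluations of $\phi$ on \eqref{9.5}, and all are right; the reduction of the six brackets to two via $[X_3,X_4]=-[X_1,X_2]$, $[X_2,X_4]=[X_1,X_3]$ is also valid. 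What your approach buys is both economy (two scalar identities replace the computation of all curvature components) and insight: it exhibits \emph{why} $D$ is flat, namely that $\phi$ annihilates the derived algebra of $\g$, so flatness is seen to be forced precisely by the $\W_1$-relations among the structure constants rather than emerging as a numerical coincidence. What the paper's direct computation buys is only that it requires no preliminary observation; your argument is strictly more informative and equally rigorous.
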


By virtue of \eqref{3.8} and \eqref{9.6} we get the non-zero
components $T_{ij}=T(X_i,X_j)$ of the torsion $T$ for the
connection $D$:
\begin{equation*}\label{9.14}
    \begin{array}{ll}
T_{12}=-\lm_1 X_1-\lm_2 X_2,\quad & T_{13}=-\lm_4 X_1-\lm_2 X_3,\\[4pt]%
T_{14}=\lm_3 X_1-\lm_2 X_4,\quad & T_{23}=-\lm_4 X_2+\lm_1 X_3,\\[4pt]%
T_{24}=\lm_3 X_2+\lm_1 X_4,\quad & T_{34}=\lm_3 X_3+\lm_4 X_4.
    \end{array}
\end{equation*}
The rest of the non-zero components are obtained by the property
$T_{ij}=-T_{ji}$.

According to \propref{prop-5.1}, the connection $D$ has a parallel
torsion if and only if $\left(D_{X_i}\ta\right)X_j=0$. The latter
equality is equivalent to $\lm_1=\lm_2=\lm_3=\lm_4=0$, because of
\eqref{9.6} and \eqref{9.13}. This case is excluded from our
investigations. Therefore, the following proposition holds:
\begin{prop}\label{prop-8.3}
The connection $D$ on the manifold $(G,P,g)$ is not parallel. \qed
\end{prop}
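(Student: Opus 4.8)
The plan is to reduce the statement to the vanishing of a single covariant derivative and then test that vanishing against the explicit data already assembled for $(G,P,g)$. By \propref{prop-5.1} the torsion of $D$ is parallel exactly when the associated Lee $1$-form $\ta$ is parallel with respect to $D$, so it suffices to decide whether $\left(D_{X_i}\ta\right)X_j$ can vanish for all $i,j$ in the nontrivial situation.

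First I would exploit that $\ta$ is left invariant: by \eqref{9.6} its frame components are constant on $G$, so $X_i\bigl(\ta(X_j)\bigr)=0$ and the covariant derivative collapses to $\left(D_{X_i}\ta\right)X_j=-\ta\bigl(D_{X_i}X_j\bigr)$. Substituting the components of $D$ from \eqref{9.13} and pairing them with \eqref{9.6} is then a finite evaluation. The decisive point is that among the resulting scalars one meets $\left(D_{X_3}\ta\right)X_1=-4\lm_1^2$, $\left(D_{X_4}\ta\right)X_2=-4\lm_2^2$, $\left(D_{X_1}\ta\right)X_3=-4\lm_3^2$ and $\left(D_{X_2}\ta\right)X_4=-4\lm_4^2$. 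Hence $D\ta=0$ forces $\lm_1=\lm_2=\lm_3=\lm_4=0$, while conversely this choice makes $\ta$ itself vanish and so trivially $D$-parallel. Since the case $\lm_1=\lm_2=\lm_3=\lm_4=0$ has been excluded from the investigation, $\ta$ is not $D$-parallel, and by \propref{prop-5.1} the torsion of $D$ is not parallel, which is the assertion.

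I do not expect any genuine obstacle here: the whole argument rests on \propref{prop-5.1} together with the tables \eqref{9.6} and \eqref{9.13} already at hand, and the only care required is the sign and index bookkeeping in evaluating $\ta\bigl(D_{X_i}X_j\bigr)$. In fact one need not compute the full set of sixteen scalars $\left(D_{X_i}\ta\right)X_j$: the four displayed diagonal-type terms already isolate each $\lm_k^2$ separately and thereby settle the equivalence at once.
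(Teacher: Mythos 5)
Your proposal is correct and takes essentially the same route as the paper: the paper likewise invokes \propref{prop-5.1} to reduce parallelism of the torsion to $\left(D_{X_i}\ta\right)X_j=0$ and observes that, by \eqref{9.6} and \eqref{9.13}, this is equivalent to $\lm_1=\lm_2=\lm_3=\lm_4=0$, the excluded trivial case. Your four explicit evaluations $\left(D_{X_3}\ta\right)X_1=-4\lm_1^2$, $\left(D_{X_4}\ta\right)X_2=-4\lm_2^2$, $\left(D_{X_1}\ta\right)X_3=-4\lm_3^2$, $\left(D_{X_2}\ta\right)X_4=-4\lm_4^2$ check out and merely make the paper's computation explicit.
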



\bigskip

\textit{\\
Dobrinka Gribacheva\\
Department of Geometry\\
Faculty of Mathematics and Informatics
\\
Paisii Hilendarski University of Plovdiv\\
236 Bulgaria Blvd.\\
4003 Plovdiv, Bulgaria
\\
e-mail: dobrinka@uni-plovdiv.bg}

\end{document}